\documentclass[10pt, a4paper]{amsart}

\usepackage{a4, a4wide}
\usepackage{amssymb}
\usepackage{amsmath, bm}
\usepackage{amsthm}
\usepackage{amstext}
\usepackage{amscd}
\usepackage{latexsym}
\usepackage{graphics}
\usepackage{color}
\usepackage{enumitem}
\usepackage[all, cmtip]{xy}
\usepackage{enumitem}
\usepackage{comment}
\usepackage[T2A]{fontenc}
\usepackage[main=english,russian]{babel}
\usepackage[colorlinks, pagebackref]{hyperref}
\hypersetup{
  colorlinks=true,
  citecolor=blue,
  linkcolor=blue,
  urlcolor=blue}

\usepackage{cleveref}

\makeatletter
\def\@tocline#1#2#3#4#5#6#7{\relax
  \ifnum #1>\c@tocdepth 
  \else
    \par \addpenalty\@secpenalty\addvspace{#2}%
    \begingroup \hyphenpenalty\@M
    \@ifempty{#4}{%
      \@tempdima\csname r@tocindent\number#1\endcsname\relax
    }{%
      \@tempdima#4\relax
    }%
    \parindent\z@ \leftskip#3\relax \advance\leftskip\@tempdima\relax
    \rightskip\@pnumwidth plus4em \parfillskip-\@pnumwidth
    #5\leavevmode\hskip-\@tempdima
      \ifcase #1
      \or\or \hskip 2em \or \hskip 2homologyem \else \hskip 3em \fi%
      #6\nobreak\relax
    \dotfill\hbox to\@pnumwidth{\@tocpagenum{#7}}\par
    \nobreak
    \endgroup
  \fi}
\makeatother

\theoremstyle{plain}

\newtheorem{theorem}{Theorem}[section]

\newtheorem{lemma}[theorem]{Lemma}
\newtheorem{corollary}[theorem]{Corollary}
\newtheorem{proposition}[theorem]{Proposition}

\theoremstyle{definition}

\newtheorem{remark}[theorem]{Remark}

\newtheorem{definition}[theorem]{Definition}

\numberwithin{equation}{section}

\newcommand{\Sm}{{\rm Sm}}

\newcommand{\rank}{{\rm rank}}

\newcommand{\Pic}{{\rm Pic}}

\newcommand{\Hom}{{\rm Hom}}

\newcommand{\Frac}{{\rm Frac}}

\newcommand{\Spec}{{\rm Spec \,}}

\newcommand{\ie}{{\it i.e.\/},\ }
\renewcommand{\iff}{\mbox{ $\Longleftrightarrow$ }}

\newcommand{\nis}{Nis}

\newcommand{\piA}{{\bm \pi}^{\A^1}}   		

\newcommand{\A}{\mathbb A}

\newcommand{\simpp}{{\rm{\Delta^{op}Pshv}}}

\def\<{\langle}
\def\>{\rangle} 
\def\-{\overline} 
\def\~{\widetilde}
\def\^{\widehat}

\def\@{\mathcal}
\def\#{\mathbb}
\def\&{\mathbf}
\def\_{\underline}
 
\def\x{\times}

\input{xy}
\xyoption{all}

\begin{document}

\title[$\#A^1$-connectedness of moduli of semistable bundles and symplectic bundles on a curve]{$\#A^1$-connectedness of moduli of semistable bundles and symplectic bundles on a curve}

\author[U. V Dubey]{Umesh ~V ~Dubey}
\address{Harish-Chandra Research Institute,
Prayagraj - 211019, Uttar Pradesh, India} 
\address{Homi Bhabha National Institute, Training School Complex, Anushakti Nagar, Mumbai 400 094, India}
 \email{umeshdubey@hri.res.in}
\author[R. Pawar]{Rakesh Pawar}
	\address{School of Mathematical Sciences, National Institute of Science Education and Research,
		Bhubaneswar, Odisha- 752050, India.}
        \address{Homi Bhabha National Institute, Training School Complex, Anushakti Nagar, Mumbai 400 094, India}
	\email{rakeshpawar@niser.ac.in}
	

\subjclass[2020]{14D20, 14D23, 14F42}
\keywords{Moduli stacks of vector bundles, symplectic bundles, $\#A^1$-connectedness}
\begin{abstract} 
In this note,  we show that over a geometrically irreducible smooth projective curve over an infinite field $k$, the  moduli stack of semistable vector bundles of fixed determinant is $\mathbb{A}^1$-connected if and only if the moduli stack admits a $k$-rational point. For this we use Langton's elementary modifications for vector bundles at the level of families. 
In addition, we prove the $\mathbb{A}^1$-connectedness of the moduli stack of symplectic bundles with forms valued in a fixed line bundle $L$ on a smooth projective curve $C$ of genus $g \ge 2$ over an infinite field $k$ with $C(k)\neq \emptyset$.  As an application, we deduce the $\mathbb{A}^1$-connectedness of moduli stack of quasi-parabolic symplectic vector bundles with forms valued in a fixed line bundle.
\end{abstract}

\maketitle
\tableofcontents

\section{Introduction}

Morel-Voevodsky developed in~\cite{MV99} the framework of homotopy theory suitable for schemes, called $\#A^1$-homotopy theory. Ever since then, there has been a significant amount of work in understanding $\A^1$-homotopy types of various algebraic geometric objects. In particular, one is interested in the study of $\A^1$-homotopy type of certain moduli stacks. 

To any stack one can associate a simplicial
sheaf via the nerve construction (see \cite{H08}) and hence defines an object in the $\#A^1$-homotopy
category. In this direction, in~\cite{HY24} it was shown that the moduli stack of vector bundles of fixed determinant on a smooth projective curve over an infinite field is $\#A^1$-connected. Further, the $\#A^1$-connectedness of the moduli stack of semistable (quasi-/parabolic) vector bundles of fixed determinant on a smooth projective curve over an algebraically closed field is considered in~\cite{CC25}.

In this paper, we study the $\#A^1$-connectedness of certain moduli stacks on a smooth projective curve over a field. 
Let $C$ be a smooth projective curve over a field $k$. Let $L$ be line bundle on $C$ and $n\geq 1$ be an integer. 

\begin{enumerate}
    \item In $\S 3$ we study $\#A^1$-connectedness of the moduli stack $Bun_{n, L}^{ss}$ semistable vector bundles of rank $n$ on a curve $C$  of fixed determinant $L$ and propose an alternative proof than in~\cite{CC25} which works over any infinite base field $k$.
    \item In $\S 4$ we study the $\#A^1$-connectedness of the moduli stack $Sympl_{2n, L}$ of rank $2n$ symplectic vector bundles  on $C$ with symplectic form valued in a fixed line bundle $L$ over the base curve $C$.
    \item As a consequence of above, in $\S 5$ we study the  $\#A^1$-connectedness of the moduli stack $qPSympl^{e, \_m}_{2n, L(D)}$ of quasi-parabolic symplectic bundle of rank $2n$ with quasi-parabolic data $(e, \_m)$ on $C$ of rank $2n$ along a finite subset $D\subset C$ of distinct $k$-rational points. 
    
\end{enumerate}

More precisely, we prove the following results:
   \begin{theorem}[\Cref{thm:main-text-ss}]
   Let $C$ be a geometrically irreducible smooth projective curve over an infinite field $k$. Then the moduli stack 
$Bun^{ss}_{n, L}$ of rank $n$ semistable vector bundles of fixed determinant $L$ on $C$ is $\#A^1$-connected iff it admits a $k$-rational point.
    
\end{theorem}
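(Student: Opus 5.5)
We split the proof into the two implications. The ``only if'' direction is formal. If $Bun^{ss}_{n,L}$ is $\#A^1$-connected, then $\piA_0(Bun^{ss}_{n,L})$ is the point sheaf. Evaluating on $\Spec k$, and using that the $\#A^1$-connected components of a simplicial sheaf are nonempty on sections over fields exactly when the sheaf itself has sections there (the map $\pi_0 \to \piA_0$ is an epimorphism of Nisnevich sheaves, and fields are Henselian local), we get a $k$-point of $\pi_0(Bun^{ss}_{n,L})$. That is, there is a semistable bundle of rank $n$ with determinant $L$ defined over $k$, which is a $k$-rational point of the stack.

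For the converse, fix a $k$-point $E_0$. We use the standard criterion, as in \cite{HY24}: it suffices to show that for every finitely generated separable field extension $F/k$ the set $\piA_0(Bun^{ss}_{n,L})(F)$ is a singleton. Concretely, any semistable $E$ on $C_F$ with $\det E\cong L_F$ should be joined to $E_{0,F}$ by a finite chain of naive $\#A^1$-homotopies. Each such homotopy is a family $\mathcal E$ on $C\times \A^1_F$ with fixed determinant that is fibrewise semistable. Since semistability is open, it suffices to produce families over $\A^1_F$ that are semistable at two rational points, provided we can then move within the open locus. The problem is that the open semistable locus in $\A^1$ need not contain an $\A^1$.

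The plan therefore follows the strategy of \cite{HY24}, now keeping track of semistability.

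First, every bundle $E$ of fixed determinant on $C_F$ is an extension, or more precisely a Hecke modification along rational points, of a fixed shape. Its underlying unstable pieces are connected to $E_0$ by linear families: the space of extensions $\mathrm{Ext}^1(E'',E')$ is an affine space, and scaling the extension class gives an $\#A^1$-family.

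Second, Langton's elementary modifications are performed in families. Given a family $\mathcal E$ over $C\times \A^1_F$ whose generic fibre is semistable but some special fibres are not, we modify along the maximal destabilizing quotient at the bad fibres. Langton's argument shows that finitely many such modifications yield a family with all fibres semistable that agrees with $\mathcal E$ over the good open set. The determinant then changes by a line bundle pulled back from $\A^1$, which is trivial, so it can be renormalised to $L$. Here we use that $k$, hence $F$, is infinite, so the bad locus is a finite set of closed points and we can choose the generic members freely.

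Third, we connect $E$ to $E_0$. Choose a linear family (for instance $t\mapsto$ the extension class $t\xi$, or a line in an affine space of Hecke data) with fibre $E$ at $t=1$ and a semistable fibre at $t=0$ related to $E_0$. Its generic fibre is semistable by openness, because the fibre at $1$ is. Applying the family Langton modification at the finitely many unstable fibres produces a family of semistable bundles that agrees with the original at $t=0$ and $t=1$, as long as those points are not in the bad locus. Concatenating such homotopies gives the result.

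We expect the main obstacle to be this second step. The difficulty is making Langton's procedure work over a one-dimensional base $\A^1_F$ rather than over a DVR, and showing that the modifications along the finitely many bad fibres glue to a vector bundle on $C\times\A^1_F$ with determinant still isomorphic to $L$. To handle it, we localize at each bad closed point of $\A^1_F$. There we run Langton over the DVR $\mathcal O_{\A^1,p}$; the process terminates by Langton's theorem. We then glue by Beauville--Laszlo, or simply by taking the subsheaf defined by the kernel at each stage, which is locally free because $C\times\A^1$ is a smooth surface and elementary modification along a Cartier divisor $C\times\{p\}$ preserves local freeness.
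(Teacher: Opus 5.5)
Your ``only if'' argument is the paper's. Your family version of Langton is also sound: run Langton over the DVR at each bad closed point of $\mathbb{A}^1_F$, glue by kernels, and note that the determinant changes only by a line bundle pulled back from $\mathbb{A}^1_F$. This is essentially \Cref{lem:modify-1-family} and the $r=1$ case of the paper's proof.

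The gap is your third step. That procedure yields a semistable homotopy only between two fibres that are \emph{already} semistable, and you never exhibit families with semistable endpoints linking $E$ to $E_{0,F}$. The scaling family $t\mapsto t\xi$ has the split bundle $E'\oplus E''$ at $t=0$. That bundle is unstable unless $E'$ and $E''$ are semistable of equal slope. The junction bundles in the chains of \cite{HY24} are likewise typically unstable. So the proviso ``as long as those points are not in the bad locus'' fails exactly where you need it, and ``concatenating'' breaks down: modifying a family at an unstable junction changes that fibre, so it no longer matches the neighbouring family. This junction problem, not Langton over a one-dimensional base, is the real obstacle.

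The paper attacks it with \Cref{lem:modif-2-family}, a simultaneous elementary modification of two consecutive families at their common unstable fibre, together with induction on the length of the chain. That step is itself delicate. The special fibre of $\ker(E\to\overline{E}\to\overline{Q})$ is an extension of $\overline{B}$ by $\overline{Q}$. Its class is the image of the Kodaira--Spencer class of the family in $\mathrm{Ext}^1(\overline{B},\overline{Q})$. So the modified fibres of the two families need not coincide without further argument.

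Your own ingredients can close the gap without any chains. Take $n\geq 2$; the case $n=1$ is trivial. Let $E$ and $E_{0,F}$ be semistable with determinant $L_F$. Choose a line bundle $M$ on $C_F$ of degree so negative that $E\otimes M^{-1}$ and $E_{0,F}\otimes M^{-1}$ are globally generated. Since $F$ is infinite, $n-1$ general sections of a globally generated rank-$n$ bundle on a curve span a trivial subbundle. Indeed, the tuples of sections that become dependent at some point form a proper closed subset, because the degeneracy condition has codimension $2$ at each point.

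Hence $E$ and $E_{0,F}$ both sit in extensions $0\to M^{\oplus(n-1)}\to(\,\cdot\,)\to L\otimes M^{1-n}\to 0$. Their classes $\xi$ and $\xi_0$ lie in the single $F$-vector space $\mathrm{Ext}^1(L\otimes M^{1-n},M^{\oplus(n-1)})$. The class $(1-t)\xi_0+t\xi$ defines a family on $C\times\mathbb{A}^1_F$ of determinant $L$, with fibre $E_{0,F}$ at $0$ and $E$ at $1$. Both endpoints are semistable, so your second step applies verbatim. It gives a single semistable $\mathbb{A}^1_F$-homotopy from $E_{0,F}$ to $E$.
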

\begin{corollary}[Corollary~\ref{cor:genusatleast2}]
    
   Let $C$ be a geometrically irreducible smooth projective curve of genus $\geq 2$ over an infinite field $k$ with $C(k)\neq \emptyset$. Let $L\in \Pic(C)$ and $n\geq 1$. Then 
    the moduli stack $Bun^{ss}_{n, L}$ of rank $n$ semistable vector bundles of fixed determinant $L$ on $C$ admits a $k$-rational point and hence is $\#A^1$-connected. 
\end{corollary}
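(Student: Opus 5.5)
Assuming the theorem (\Cref{thm:main-text-ss}), the corollary reduces to exhibiting a single $k$-rational point of $Bun^{ss}_{n, L}$. That is, we need a semistable vector bundle $E$ of rank $n$ on $C$, defined over $k$, with $\det E \cong L$. Since $C(k)\neq\emptyset$, fix a rational point $x\in C(k)$. Write $d=\deg L = qn + r$ with $0\le r<n$. Twisting by powers of $\mathcal{O}_C(x)$ changes the determinant by $\mathcal{O}_C(nx)$, and semistability is preserved under twisting. We may therefore replace $L$ by $L(-qnx)$ and assume $0\le d<n$.

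The plan is to produce $E$ as an extension or elementary modification built from line bundles over $k$. Semistability is a geometric property: it can be checked after base change to $\bar k$, because the Harder--Narasimhan filtration descends. So it suffices to build $E$ over $k$ whose base change is semistable. I would proceed in the following steps.

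\begin{enumerate}
\item \textbf{The case $d=0$.} Take $E=\mathcal{O}_C^{\oplus (n-1)}\oplus L$ with $\deg L=0$. This is a direct sum of degree $0$ line bundles, hence polystable of slope $0$ and in particular semistable, with $\det E = L$.
\item \textbf{The case $0<d<n$.} Here simple direct sums of line bundles fail, since the summands would have unequal slopes. Instead I would use a Hecke modification at $x$. Start from a semistable bundle $F$ of degree $n$ and determinant $L(( n-d)x)$, for instance $F=\mathcal{O}(x)^{\oplus(n-1)}\oplus L((n-d)x)(-(n-1)x)$. The last summand has degree $d+(n-d)-(n-1)=1$, so $F$ is a sum of degree $1$ line bundles and is semistable of slope $1$. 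Then choose a $k$-rational subspace $V\subset F_x$ of codimension $n-d$ and set $E=\ker(F\to F_x/V)$. This gives $\det E = L$ and $\deg E=d$.
\item \textbf{Checking semistability of $E$.} A subbundle $E'\subset E$ of rank $r'$ lies in the saturation $F'\subset F$. Then $\deg E'\le \deg F' - \dim\big(F'_x/(F'_x\cap V)\big)$, and $\deg F'\le r'$. We need $\deg E'/r'\le d/n$. This requires $V$ to be in general position with respect to the possible subbundles $F'$.
\end{enumerate}

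The third step is the main obstacle, because the subbundles of $\mathcal{O}(x)^{\oplus n-1}\oplus M$ have fibers at $x$ that are not controlled well. A cleaner route is to use genus $g\ge 2$ to choose $F$ stable of degree $n$ rather than merely a sum of line bundles; stability then forces $\deg F'<r'$. With $F$ stable, the elementary modification of a stable bundle at a single point along any codimension-$c$ subspace, with $c<n$, stays semistable. The point is that $\deg E'\le \deg F'\le r'-1$, and
\[ \frac{r'-1}{r'}\le \frac{d}{n} \]
whenever $d\ge n-n/r'$. This does not always hold, so I would need to weaken the choice and rely on $V$ generic; since $k$ is infinite, a generic rational $V$ exists.

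Alternatively, and more robustly, I would argue through the moduli space. Over an infinite field, the open locus of semistable bundles in an irreducible family is nonempty, since $Bun^{ss}_{n,L}$ is dense open in $Bun_{n,L}$ for $g\ge 2$ geometrically. A rational family parametrized by an affine space over $k$ then has $k$-points in any nonempty open subset. The family of extensions $0\to\mathcal{O}^{n-1}\to E\to L\to 0$, parametrized by $\mathrm{Ext}^1(L,\mathcal{O}^{n-1})$, is such a family. For $g\ge 2$ it contains semistable members after allowing a suitable twist to make the degrees fit, and its generic member is semistable. Because $k$ is infinite, this family has a $k$-rational semistable member. The crux is proving that the generic extension is semistable, which I would check over $\bar k$ using known results on general extensions, for example Lange–Narasimhan type estimates.
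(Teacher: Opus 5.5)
Your reduction to \Cref{thm:main-text-ss} matches the paper's. The paper then obtains the rational point in one line by citing \cite[Lemma 1.5]{Hof07}. Since you construct the point yourself, that construction is the whole content of the corollary, and none of your three attempts completes it.

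The Hecke route with $F=\mathcal{O}(x)^{\oplus(n-1)}\oplus M$ does not merely lack a general-position argument; it fails outright. If $d\ge 2$, then $V$ meets the hyperplane $H=(\mathcal{O}(x)^{\oplus(n-1)})_x$ in some $w\neq 0$. The line $kw$ is the fibre at $x$ of the subbundle $\mathcal{O}(x)\hookrightarrow\mathcal{O}(x)^{\oplus(n-1)}$ given by a constant vector. This degree-one line bundle therefore lies in $E=\ker(F\to F_x/V)$, and since $1>d/n$, no choice of $V$ makes $E$ semistable. The stable-$F$ variant presupposes a $k$-rational stable bundle of rank $n$ with prescribed determinant. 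That is a stronger form of the statement you are trying to prove, and the required genericity of $V$ is never established.

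The extension route is the right idea, but its key assertion — that the family of extensions contains a semistable member — is only stated. The reason you give does not supply it: an irreducible family need not meet the dense open substack $Bun^{ss}$ unless you know, for instance, that it dominates. One way to close this:
\begin{enumerate}
\item Pick $m$ with $\deg L+mn>n(2g-1)$ and set $L'=L(mnx)$.
\item Over $\bar k$, since $g\ge 2$, there is a stable $E$ with $\det E\cong L'$.
\item Since $\mu(E)>2g-1$, $E$ is globally generated with $H^1(E)=0$. Then $n-1$ general sections span a trivial subbundle of rank $n-1$ whose quotient is a line bundle, necessarily isomorphic to $\det E\cong L'$.
\item Hence the semistable locus of the universal extension over the affine space $\mathrm{Ext}^1(L',\mathcal{O}_C^{\oplus(n-1)})$ is open and nonempty after base change to $\bar k$. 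Because $k$ is infinite, it has a $k$-point.
\item Twisting that bundle by $\mathcal{O}_C(-mx)$ gives a semistable bundle over $k$ with determinant $L$.
\end{enumerate}
With this step supplied, or with Hoffmann's lemma cited as in the paper, your argument is complete.
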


\begin{theorem}[\Cref{thm:main-text}]\label{thm:main}
   Let $C$ be a geometrically irreducible smooth projective curve over a field $k$ of genus $g\geq 2$ and $C$ admits a $k$-rational point. Let $L$ be line bundle on $C$ and $n\geq 1$. Then 
    the moduli stack $Sympl_{2n, L}$ of rank $2n$ symplectic vector bundles  on $C$ is $\#A^1$-connected. 
\end{theorem}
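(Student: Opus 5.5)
We follow the strategy of \cite{HY24} for $Bun_{n,L}$. For a stack $\mathcal X$, $\#A^1$-connectedness means that the sheaf $\piA_0(\mathcal X)$ is trivial. By the unstable connectivity theorem and the standard criterion (Morel), it suffices to show two things. First, $\mathcal X(k)\neq\emptyset$. Second, for every finitely generated separable field extension $F/k$, any two objects of $\mathcal X(F)$ can be joined by a finite chain of naive $\#A^1$-homotopies, i.e.\ objects over $\#A^1_F$ (or over an open of $\#A^1_F$ containing the rational points $0,1$, which suffices since $F$ is infinite). Since the hypotheses on $C$ (geometric irreducibility, genus $g\geq 2$, a rational point) are stable under base change to $F$, we may replace $k$ by $F$. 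Note the theorem does not assume $k$ infinite; if needed we reduce via the rational point and the fact that $\#A^1$-chains over a finite field can be built from affine lines of sections.

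\emph{Step 1: a base point.} Write $L=\mathcal O(dx)\otimes L'$ with $x\in C(k)$. Choose a line bundle $M$ with a map to $L$ (for instance $M=L(-mx)$ for suitable $m$), and take the hyperbolic bundle $M\oplus M^{-1}\otimes L$ with the standard $L$-valued form. More generally, for a rank $n$ bundle $E$ we use $H_L(E)=E\oplus E^\vee\otimes L$. This gives a $k$-point of $Sympl_{2n,L}$.

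\emph{Step 2: every symplectic bundle is $\#A^1$-chain connected to a hyperbolic one.} Given $(V,\omega)$ over $C$, we find a Lagrangian subbundle $E\subset V$ over $k$. We build it inductively by isotropic line subbundles: take a line subbundle $N\subset V$ of sufficiently negative degree. Such an $N$ exists using the rational point, e.g.\ $N=\mathcal O(-mx)$ mapping into $V$ for $m\gg0$, with saturation; it is automatically isotropic, since $\omega|_{N\otimes N}$ is alternating of rank one. Then pass to $N^\perp/N$, which is symplectic of rank $2n-2$, and lift. This produces the extension
\[
0\to E\to V\to E^\vee\otimes L\to 0 .
\]
Such extensions compatible with the form are classified by a vector space, namely $H^1(C,\mathrm{Sym}^2E\otimes L^{-1})$ (symmetric rather than alternating classes). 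Scaling the class by $t\in\#A^1$ gives a family over $\#A^1$ whose fiber at $1$ is $V$ and at $0$ is $H_L(E)$.

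\emph{Step 3: connect hyperbolic bundles.} Next we connect $H_L(E)$ and $H_L(E')$. The functor $E\mapsto H_L(E)$ gives a morphism $Bun_n\to Sympl_{2n,L}$. The stack $Bun_{n,d}$ of fixed degree is $\#A^1$-chain connected by \cite{HY24}: pass to fixed determinant, and note that the determinant can be varied, since the Jacobian is not $\#A^1$-connected. Degrees and determinants of $E$ are handled as follows. An elementary modification at $x$ changes $\det E$ by $\mathcal O(\pm x)$, and it is realized symplectically by a Hecke-type chain inside $H_L(E)$. Alternatively, use $H_L(E)\cong H_L(E^\vee\otimes L)$ and the isomorphism $H_L(E_1\oplus E_2)=H_L(E_1)\perp H_L(E_2)$. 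Together these reduce us to rank one, $H_L(M)$, and we must connect $H_L(M)$ to $H_L(M')$ for arbitrary line bundles $M,M'$. Any $H_L(M)$ has another Lagrangian line subbundle $N$ of very negative degree (a graph of a map $M\to M^\vee L$ twisted), and then Step 2 connects it to $H_L(N)$. Hence, by choosing $N$ of the form $\mathcal O(-mx)$ for both sides, $H_L(M)$ and $H_L(M')$ both connect to $H_L(\mathcal O(-mx))$.

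\emph{Main obstacle.} The hardest point is this last step: showing that $H_L(M)$ contains a Lagrangian subbundle isomorphic to a prescribed very negative line bundle, defined over the ground field. This amounts to a section of $M^\vee(-mx)\oplus M L(-mx)^{-1}\cdots$ avoiding common zeros, which needs $g\geq2$-independent Riemann--Roch and a Bertini-type choice over $k$, where infiniteness of $k$ (or the rational point) is used.
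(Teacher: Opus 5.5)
Your argument is correct in substance and takes a genuinely different route from the paper. The paper uses the Biswas--Hoffmann diagram $\mathcal{S}ub_{1,I,2n}(\mathcal{E}^{univ})\xrightarrow{\Psi_I}\mathcal{E}xt(\mathcal{F}^{univ},I,2n-2)\xrightarrow{\Phi'_I}Sympl_{2n-2,L}$ and inducts on $n$, peeling off one isotropic line at a time. Concretely, it does three things:
\begin{itemize}
\item it fixes a single very negative $I$ that is a subbundle of both given bundles;
\item it shows that over an $\mathbb{A}^1_K$-family in $\mathcal{E}xt$ the fibre of $\Psi_I$ is an affine space, since relative Ext stacks are vector bundles over $\mathbb{A}^1_K$, trivial by Quillen--Suslin;
\item inside $\mathcal{E}xt$ it degenerates $0\to I\to I^\perp\to I^\perp/I\to 0$ to the split extension, which lies in the image of the section $s_I$ of $\Phi'_I$.
\end{itemize}
You instead split off a whole Lagrangian at once, degenerating $(V,\omega)$ to the hyperbolic form on $E\oplus E^\vee\otimes L$. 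You then move among hyperbolic forms using $Bun_n$ and orthogonal sums, so the only genuinely symplectic input is the rank-$2$ step. This avoids the auxiliary stacks and the representability argument, and it produces explicit chains. Your common Lagrangian $\mathcal{O}(-mx)$ in $M\oplus M^{-1}L$ and $M'\oplus M'^{-1}L$ plays exactly the role of the paper's common subbundle $I\subset E_1,E_2$. Over a finite field, both choices need a Bertini-type count.

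Some points in Steps~2--3 need repair or precision.
\begin{itemize}
\item The claim that $Bun_{n,d}$ is $\mathbb{A}^1$-chain connected is false. Since $\Pic(C\times\mathbb{A}^1_F)=\Pic(C_F)$, the determinant is constant along $\mathbb{A}^1$-chains in $Bun_n$. The Hecke remark is unproved, and in any case it only changes $\det E$ by powers of $\mathcal{O}(x)$. Neither is needed: you only need $E$ to be chain connected in $Bun_n$ to a direct sum of line bundles. This follows from \cite{HY24} for fixed determinant, or, over any field, by filtering $E$ by line subbundles and scaling extension classes. Determinants are then changed for free in the rank-$2$ step.
\item To make the scaling compatible with the form in all characteristics, without identifying the group of symplectic extension classes, use the Rees family $V[t]+t^{-1}E[t]\subset V[t,t^{-1}]$ with the form $t\omega$. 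It is locally free. The form $t\omega$ is regular because $E$ is isotropic, and nondegenerate because $E=E^\perp$. It restricts to $(V,\omega)$ at $t=1$ and to the hyperbolic form on $E\oplus V/E$ at $t=0$.
\item A subbundle $\mathcal{O}(-mx)\subset M\oplus M^{-1}L$ is a nowhere vanishing section of $M(mx)\oplus M^{-1}L(mx)$, not of the bundle you wrote. Over an infinite field, take $s_1\neq 0$ and pick $s_2$ outside the finitely many proper subspaces of sections vanishing at a zero of $s_1$.
\end{itemize}
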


Fix a line bundle $L$ on $C$ over $k$ and a quasi-parabolic data $(e, \_m)$ on $C$ of rank $2n$ along a divisor $D$ consisting of $k$-rational points in the sense of~\Cref{def:quasi-para-data}. One endows $L(D)=L\otimes_{\@O_C} \@O_C(D)$ with a quasi-parabolic bundle structure where for each point $p\in D,$ with a filtration $\@L_p\supset 0$ as in \cite[Remark 2.2]{BCD26}. Following the works of~\cite{BMW11, BCD26}, we consider the moduli stack $qPSympl^{e, \_m}_{2n, L(D)}$ of quasi-parabolic symplectic bundles of rank $2n$ on $C$ and show the following
\begin{theorem}[see~\Cref{thm:quasi-parabolic-sympl}]\label{intro-thm:quasi-parabolic-sympl}
        Let $C$ be a geometrically irreducible smooth projective curve of genus $g\geq 2$ over a field $k$ and a finite subset $D \subset C$ of distinct $k$-rational points. Fix a quasi-parabolic data $(e, \_m)$ on $C$ of rank $2n$ along $D$ in the sense of~\Cref{def:quasi-para-data}. 
    
    Then $qPSympl^{e, \_m}_{2n, L}$ is $\#A^1$-connected. 
\end{theorem}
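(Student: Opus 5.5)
The strategy is to deduce the $\#A^1$-connectedness of $qPSympl^{e, \_m}_{2n, L}$ from \Cref{thm:main-text}, i.e.\ from the $\#A^1$-connectedness of $Sympl_{2n, L}$. Recall that $\#A^1$-connectedness of a stack $\@X$ means that the sheaf $\pi_0^{\#A^1}(\@X)$ is trivial. It suffices to show two things: $\@X$ has a $k$-point, and for every finitely generated separable field extension $F/k$ any two $F$-points of $\@X$ are connected by a chain of naive $\#A^1$-homotopies (morphisms $\#A^1_F \to \@X$). For stacks this is phrased via the nerve, following the criterion already used for $Bun_{n,L}$ in \cite{HY24}.

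\textbf{Step 1: forgetful map.} I would use the forgetful morphism $\phi: qPSympl^{e, \_m}_{2n, L} \to Sympl_{2n, L}$, which sends a quasi-parabolic symplectic bundle $(E,\omega,\{F_{p,\bullet}\}_{p\in D})$ to $(E,\omega)$. Here the form is $L$-valued, with $L(D)$ taking its parabolic structure as described before the statement. The fibre of $\phi$ over an $S$-point $(E,\omega)$ is a product over $p\in D$ of relative flag varieties. Each flag is isotropic, of the type prescribed by $(e,\_m)$, inside the symplectic fibre $E_p$ over $S$. So each fibre is the isotropic flag scheme $Sp(E_p,\omega_p)/P_p$ for a parabolic subgroup $P_p$ determined by $\_m$. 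This makes $\phi$ representable, smooth and projective, and Zariski-locally on $S$ (after trivialising $E_p$ symplectically) it is a product of $Sp_{2n}/P$.

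\textbf{Step 2: rational points and fibres.} The flag varieties $Sp_{2n}/P$ are split and $\#A^1$-chain connected: they are covered by open cells isomorphic to affine spaces, and any two $F$-points can be joined through Bruhat cells or unipotent orbits. So every fibre over a field point has an $F$-point, and any two $F$-points of a fibre are naively $\#A^1$-homotopic. Lifting a $k$-point of $Sympl_{2n,L}$ (which exists because $Sympl_{2n,L}$ is $\#A^1$-connected by \Cref{thm:main-text}) gives a $k$-point of the parabolic stack.

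\textbf{Step 3: lifting homotopies.} Given $F$-points $x,y$ of $qPSympl$, \Cref{thm:main-text} provides a chain of $\#A^1$-homotopies in $Sympl_{2n,L}$ joining $\phi(x)$ and $\phi(y)$. Here there is a subtlety: $\#A^1$-connectedness gives triviality of $\pi_0^{\#A^1}$, but not literally a naive chain. I would therefore use the explicit chains built in the proof of \Cref{thm:main-text}, or work with the $Sing^{\#A^1}$ model, where the same lifting argument applies degree-wise.

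For each homotopy $h:\#A^1_F \to Sympl_{2n,L}$, the pullback $h^*E$ is a symplectic bundle on $C\times\#A^1_F$. Its restriction to $\{p\}\times\#A^1_F$ is a symplectic bundle on $\#A^1_F$, hence trivial by Quillen–Suslin for symplectic groups: $Sp_{2n}$-torsors over $\#A^1_F$ are trivial. The fibre of $\phi$ over $h$ is therefore $\#A^1_F\times\prod_p Sp_{2n}/P_p$, so constant flags give lifts of $h$. At the junctions of the chain, the lifted endpoints lie in the same fibre of $\phi$, and Step 2 connects them. Concatenating these pieces connects $x$ to $y$.

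\textbf{Main obstacle.} The main difficulty is making Step 3 rigorous at the level of $\pi_0^{\#A^1}$ of stacks, rather than for naive chains. A cleaner route would be to show that $\phi$ is an $\#A^1$-fibration with $\#A^1$-connected fibre, and then apply the long exact sequence to get $\pi_0^{\#A^1}(qPSympl)\cong\pi_0^{\#A^1}(Sympl)=*$. I would try that first: the fibre $\prod_p Sp_{2n}/P_p$ is $\#A^1$-connected, and $\phi$ is Zariski-locally trivial with this fibre over any smooth affine base after symplectic trivialisation near $D$. The triviality step is where the hypothesis that $D$ consists of $k$-rational points is used.
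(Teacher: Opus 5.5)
Your proposal is correct and follows the paper's route. The forgetful map $qPSympl^{e,\underline{m}}_{2n,L}\to Sympl_{2n,L}$ is Zariski-locally trivial with fibre a product of flag varieties $Sp_{2n}/P_p$, these fibres are $\mathbb{A}^1$-chain connected, and one concludes from \Cref{thm:main-text}. Your Step 3 actually supplies the chain-lifting argument that the paper leaves implicit. The subtlety you raise does not arise: \Cref{thm:main-text}(3) already asserts naive $\mathbb{A}^1$-chain connectedness of $Sympl_{2n,L}$, so your lifted chains together with \Cref{lem:a1-chain-a1-conn} finish the proof, and no $\mathbb{A}^1$-fibration argument is needed.
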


\paragraph{\bf Structure of the paper:}
In section~\ref{sec:prelim}, we recall preliminaries on $\#A^1$-homotopy theory and some useful results for the main paper. In section~\ref{sec:semistable}, we study $\#A^1$-connectedness pf the moduli stack of semistable vector bundles. 
In section~\ref{sec:symplectic}, we study $\#A^1$-connectedness of the moduli stack of symplectic vector bundles.  In section~\ref{sec:quasi-pb-symplectic}, we study $\#A^1$-connectedness of the moduli stack of quasi-parabolic symplectic vector bundles as an application.

\subsection*{Acknowledgments}
The first name author is partially supported by ANRF under project no. ANRF/ARGM/2025/002470/MTR and HRI, Prayagraj.
The second-named author is supported by NISER, HBNI, DAE, India.

\section{Preliminaries}\label{sec:prelim}

\subsection{\texorpdfstring{$\#A^1$-homotopy theory}{hom}}
First, we recall some basic notions in the homotopy theory.

Let $k$ be a field. 
$\Sm/k$ denotes the category of finite type smooth $k$-schemes. Let $(\Sm/k)_{\nis}$ denote the site with the Nisnevich topology on the category $\Sm/k$. The category of simplicial presheaves $Spc_k:=\simpp(\Sm/k)_{\nis}$ is a simplicial model category with the local-Ninsecich-model structure as in~\cite[Theorem 1.4]{MV99}. By \cite[section 3.2, page 105]{MV99}, the above simplicial model category   $Spc^{\A^1}_k:=\simpp(\Sm/k)_{\nis}$ is endowed with the $\A^1-$model category structure. An object of $Spc^{\A^1}_k$ is simply referred to as a \textit{Space}. Inverting the $\A^1-$weak equivalences in $Spc^{\A^1}_k$, the resulting homotopy category is the homotopy category of smooth schemes over $k$ denoted by $\@H(k).$ 

 For any object $\@X, \@Y\in Spc^{\A^1}_k$, we denote $[\@X, \@Y]_{\A^1}:=\Hom_{\@H(k)}(\@X, \@Y)$. 
 \begin{definition}\label{def:A1-conn}
    Let $\@X$ be a simplicial presheaf on $\Sm/k$. Define $\piA_0(\@X)$, \emph{the sheaf of $\#A^1$-connected components of $\@X$}, as the Nisnevich sheafification of the presheaf  on $\Sm/k$ $$U\mapsto [U, \@X]_{\A^1}$$ 
    for each $U\in \Sm/k$. 

    We say $\@X$ is $\#A^1$-connected if the Nisnevich  sheaf $\piA_0(\@X)=*.$
 \end{definition}

\begin{definition}\label{def:naive-A1}
    Let $\@X$ be a simplicial presheaf on $\Sm/k$. Let $U$ be a $k$-scheme. Let $f_0, f_1\in \@X(U)_0$ be $0$-simplices of $\@X(U)$. We say $f_0, f_1$ are naively $\#A^1$-homotopic if there is $h: \#A^1_U\to \@X$ such that $h(0)=f_0$ and $h(1)=f_1.$ 
\end{definition}
\begin{definition}\label{def:naive-chain-A1}
    Let $\@X$ be a simplicial presheaf on $\Sm/k$. Define $\@S(\@X)$, \emph{the sheaf of naive $\#A^1$-chain connected components of $\@X$}, as the Nisnevich sheafification of the presheaf  on $\Sm/k$ assigning $$U\mapsto {\@X(U)_0}/\sim$$ for each $U$ in $\Sm/k$ where $\sim$ is the equivalence relation generated by naive $\#A^1$-homotopies. 

    We say $\@X$ is naively $\#A^1$-chain connected if for each finitely generated field extension $K$ of $k$, $\@S(\@X)(K)=*$.
\end{definition}

We recall the following lemma which will be used in the main result.
\begin{lemma}(\cite[Section 2,
Corollary 3.22]{MV99} with~\cite[Lemma 6.1.3.]{Mor05})\label{lem:a1-chain-a1-conn}
    Let $\@X$ be a simplicial presheaf on $\Sm/k.$ If $\@X$ is naively $\#A^1$-chain connected \ie $\@S(\@X)(K)=*$ for each finitely generated field extension $K$ of $k$, then $\@X$ is $\#A^1$-connected. 
\end{lemma}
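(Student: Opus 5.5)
The plan is to pass through the unstable connectivity theorem of Morel–Voevodsky: for any space $\@X$, the canonical morphism $\@X\to L_{\A^1}\@X$ induces an epimorphism of Nisnevich sheaves $\pi_0(\@X)\to \piA_0(\@X)$ (\cite[Section 2, Corollary 3.22]{MV99}). I would also use the fact that this epimorphism factors through the sheaf $\@S(\@X)$ of naive $\#A^1$-chain connected components. The factorization holds because naively $\#A^1$-homotopic sections $f_0,f_1\in\@X(U)_0$ become equal in $[U,\@X]_{\A^1}$: the maps $i_0,i_1\colon U\to \#A^1_U$ coincide in $\@H(k)$, since the projection $\#A^1_U\to U$ is an $\#A^1$-weak equivalence. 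Hence there is a natural surjection of Nisnevich sheaves
\[
\@S(\@X)\twoheadrightarrow \piA_0(\@X).
\]

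Next I would use \cite[Lemma 6.1.3]{Mor05}. By the unstable $0$-connectivity theorem, the sheaf $\piA_0(\@X)$ is a Nisnevich sheaf, and as a quotient of $\pi_0$ it lies in the class to which Morel's lemma applies. That lemma says such a sheaf $\@F$ is trivial, i.e. $\@F\cong *$, as soon as its sections over every finitely generated field extension $K/k$ are a single point. Said differently, to test whether the map $\@F\to *$ is an isomorphism, it suffices to check on stalks, and to control those stalks by their values at fields.

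Now assume $\@S(\@X)(K)=*$ for every finitely generated $K/k$. The surjection $\@S(\@X)\to\piA_0(\@X)$ is surjective on sections over fields, because fields are Henselian local, hence Nisnevich points. It follows that $\piA_0(\@X)(K)=*$; the set is nonempty, being the image of a point. By Morel's lemma, $\piA_0(\@X)=*$, so $\@X$ is $\#A^1$-connected.

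The main obstacle is the passage from fields to all smooth Henselian local schemes, i.e. from $\piA_0(\@X)(K)=*$ to $\piA_0(\@X)=*$. This is exactly the content of \cite[Lemma 6.1.3]{Mor05}, which relies on $\#A^1$-invariance properties of $\piA_0$ (an unramifiedness-type statement), so I would cite it rather than reprove it. If that is not available in the needed generality, my fallback would be to show directly that $\piA_0(\@X)(\Spec\mathcal O)\to\piA_0(\@X)(\Frac\mathcal O)$ is injective for smooth Henselian local $\mathcal O$, and then conclude from the triviality at the generic point.
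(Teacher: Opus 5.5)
Your proof is correct and takes the route the paper's citation indicates: the epimorphism of \cite[\S 2, Corollary 3.22]{MV99} factors through $\mathcal{S}(\mathcal{X})$ because naive $\mathbb{A}^1$-homotopies become equalities in $\mathcal{H}(k)$, it remains surjective on sections over finitely generated (separable) field extensions because these are Nisnevich stalks, and \cite[Lemma 6.1.3]{Mor05} then detects $\piA_0(\mathcal{X})=*$ on such fields. You should drop the side commentary: Morel's field-detection statement is specific to $\piA_0(\mathcal{X})$ (being a quotient of some $\pi_0$ is no restriction, since every sheaf is $\pi_0$ of itself), and $\mathbb{A}^1$-invariance or unramifiedness of $\piA_0$ is not available for arbitrary spaces, so your fallback of proving $\piA_0(\mathcal{X})(\Spec\mathcal{O})\to\piA_0(\mathcal{X})(\Frac\mathcal{O})$ injective is not a viable route and the citation is genuinely needed.
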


\subsection{Space associated to a stack}
For any stack $\@Y$ on a field $k$, using the nerve construction (see \cite{H08}) one can associate a simplicial nerve presheaf $N(\@Y)$ on $\Sm/k$, where for each $k$-scheme $U$, the 0-simplices on $U$ are given by objects of the groupoid $\@Y(U)$.  

In the view of the \Cref{def:naive-A1}, we say two objects $f_0, f_1$ of the groupoid $\@Y(U)$ on a scheme $U$, are   naively $\#A^1$-homotopic if there is an object $h$ of the groupoid $\@Y(\#A^1_U)$ such that $h(0)=f_0$ and $h(1)=f_1.$ 

Similarly, given a stack $\@Y$, one can define $\@S(\@Y)$, \emph{the sheaf of naive $\#A^1$-chain connected components of $\@Y$} as \emph{the sheaf of naive $\#A^1$-chain connected components} of the simplicial presheaf $N(\@Y)$.    

We say a stack $\@Y$ is $\#A^1$-connected if associated similicial nerve presheaf $N(\@Y)$ is $\#A^1$-connected in the sense of~\Cref{def:A1-conn}. Similarly we say a stack $\@Y$ is naively $\#A^1$-chain connected if associated similicial nerve presheaf $N(\@Y)$ is $\#A^1$-chain connected in the sense of~\Cref{def:naive-chain-A1}.

\section{\texorpdfstring{$\#A^1$-connectedness of moduli stack of semistable vector bundles on curve}{}}\label{sec:semistable}

\emph{Let $C$ be a geoemtrically irreducible smooth projective curve over an infinite field $k$ such that $C(k)\neq\emptyset$. Let $L$ be a line bundle on $C$ and $n\geq 1$ be an integer.}

 In this section,  we consider the moduli stack of vector bundles of rank $n$ of fixed determinant $L$ on curve. Let us denote by 
$Bun_{n, L}$ the moduli stack defined by groupoids for each $k$-scheme $S$ 
\begin{itemize}
    \item the objects are locally free sheaves $\@E$ of rank $n$ on $C\x S$ paramterized by $S$ and
    \item morphisms as isomorphism of locally free sheaves on $C\x S.$ 
\end{itemize}
If the base field $k$ is infinite, in \cite{HY24} it is shown that $Bun_{n, L}$ is $\#A^1$-connected. 

We consider the moduli stack of semistable vector bundles of rank $n$ of fixed determinant $L$ on curve. More precisely, 
$Bun^{ss}_{n, L}$ is the open substack of $Bun_{n, L}$  defined by groupoids for each $k$-scheme $S$ 
\begin{itemize}
    \item the objects are semistable locally free sheaves $\@E$ of rank $n$ on $C\x S$ parametrized by $S$.
\end{itemize}

    In this section, we study the $\#A^1$-connectedness of the moduli stack $Bun^{ss}_{n, L}$.  We provide an alternative argument than in~\cite{CC25}:

First we note following two lemmas which are of independence interest. We believe the results maybe be known to experts and the main idea is from~\cite{Langton75}, but due to lack of reference in the literature we present them below.  

\subsection{Langton type modification of families}

Let $K$ be a finitely generated field extension of $k.$ By base change via  $\Spec K\to \Spec k$, by replacing $C$ by $C\x_k \Spec K$, (for the ease of notation) we can assume that $C$ is defined over the field $K.$
Let $i_0: C=C\x \{0\}\to C\x \#A^1_K$ and  $i_1: C=C\x \{1\}\to C\x_k \#A^1_K$ be the inclusions. 

Let $\@E$ be a vector bundle on $C\x_k \#A^1_K$. By~\cite[Proposition 2.11]{ABBLT}  there is a Zariski open subset $U\subset \#A^1_K$ such that for each $u\in U$, the restriction $\@E|_{C\x_k \Spec K(u)}$ is semistable on $C\x \Spec K(u)$. Let us refer to maximal open such $U\subset \#A^1_K$ as the semistable locus of $\@E$.

In the following lemma, we modify a single family of generically semistable vector bundles parametrized by $\#A^1$ into a fully semistable family. 
\begin{lemma} \label{lem:modify-1-family} 

Let $\@E$ be a vector bundle on $C\x \#A^1_K$  such that $i_0^*\@E$ is semistable on $C$ via  $i_0: C=C\x \{0\}\to C\x \#A^1_K$. Let $U\subset \#A^1_K$ be the maximal semistable locus of $\@E$ in $C$. Assume that $1\notin U$. Then there exists a vector bundle $\~{\@E}$ on  $C\x \#A^1_K$  such that 
\begin{enumerate}
\item $\~{\@E}$ is semistable on  $C\x (\#A^1_K-\{1\})$ 
\item $\~{\@E}|_{C\x \{1\}}= \@E|_{C\x \{1\}}$ and
    \item $\~{\@E}|_{C\x U}= \@E|_{C\x U}.$    
\end{enumerate}
\end{lemma}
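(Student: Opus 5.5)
The plan is to perform Langton's elementary modifications at each "bad" point of $\#A^1_K$ other than $1$. These modifications are global, but they are supported on a single fibre. First I set up the bad locus. Since $i_0^*\@E$ is semistable, $0\in U$, so $U$ is a nonempty Zariski open subset of $\#A^1_K$. Hence $Z:=\#A^1_K\setminus U$ is a finite set of closed points, and it contains $1$ by hypothesis. Let $Z'=Z\setminus\{1\}=\{p_1,\dots,p_r\}$. It suffices to construct $\~{\@E}$ that agrees with $\@E$ on $C\x(\#A^1_K\setminus Z')$ and has semistable fibres over each $p_j$. Then (2) and (3) hold because $1\notin Z'$ and $U\cap Z'=\emptyset$, and (1) holds because every point of $\#A^1_K-\{1\}$ lies either in $U$ or in $Z'$. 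For non-closed points, note that the generic point lies in $U$. Since modifications at distinct $p_j$ are supported on disjoint fibres, I can treat one point $p=p_j$ at a time.

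Next I describe the elementary modification at a closed point $p$ with residue field $\kappa=K(p)$. Let $C_p=C\x\Spec\kappa$, viewed as the Cartier divisor $C\x\{p\}\subset C\x\#A^1_K$. Suppose $\@E_p=\@E|_{C_p}$ is not semistable. Let $F\subset \@E_p$ be its maximal destabilizing subsheaf, i.e.\ the first step of the Harder--Narasimhan filtration. $F$ is saturated, so $Q=\@E_p/F$ is a vector bundle on $C_p$. Define
\[
\@E' := \ker\bigl(\@E\to \@E_p\to Q\bigr),
\]
with $Q$ pushed forward to $C\x\#A^1_K$. Since $C_p$ is an effective Cartier divisor and $Q$ is locally free on it, $\@E'$ is locally free of the same rank. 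Moreover $\@E'$ coincides with $\@E$ away from $C_p$. Also $\det\@E'=\det\@E\otimes\@O(-\rank(Q)\,C_p)$, and $\@O(C_p)$ is trivial because $p$ is a principal divisor on $\#A^1_K$. Therefore the fibrewise determinants do not change, which is relevant when this lemma is later applied to $Bun^{ss}_{n,L}$. Iterating this construction yields a sequence $\@E=\@E^{(0)}\supset\@E^{(1)}\supset\cdots$ of bundles on $C\x\#A^1_K$. All of them agree with $\@E$ off $C_p$.

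The key step, and the main obstacle, is to show that this iteration terminates, i.e.\ that some $\@E^{(m)}|_{C_p}$ is semistable. For this I would restrict to $C\x\Spec R$ with $R=\@O_{\#A^1_K,p}$, a DVR with fraction field $K(t)$ and residue field $\kappa$. The generic fibre is semistable, since the generic point lies in $U$. By construction, the modifications above are exactly Langton's elementary transformations over $R$. Langton's theorem~\cite{Langton75} (valid for any DVR, with no hypothesis on the residue field) then says that after finitely many steps the special fibre becomes semistable. I would recall the termination argument rather than merely cite it. One attaches to the special fibre the pair $(\mu_{\max},\rank F)$, shows that this pair weakly decreases at each step, and shows that an infinite stationary sequence would produce a destabilizing subsheaf of the generic fibre by a completion/limit argument over $\^R$. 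This contradicts generic semistability. Since semistability can be checked after the base change $\kappa\to\-\kappa$, and the Harder--Narasimhan filtration descends, no issue arises from $\kappa$ being nonperfect or non-rational over $K$. Finally, performing this process at each $p_j\in Z'$ gives $\~{\@E}$ with the required properties (1)--(3).
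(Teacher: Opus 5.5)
Your proof is correct, and it reaches the result by a different route from the paper's. The paper works at the level of the stack. It regards $\mathcal{E}|_{C\times U}$ as a map $h\colon U\to Bun^{ss}_{n,L}$ and uses universal closedness to extend it to $\bar h$ on $\mathbb{A}^1_K-\{1\}$. It then glues the resulting family with $\mathcal{E}|_{C\times(U\cup\{1\})}$ along $C\times U$ via an isomorphism $\phi$.

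You instead perform Langton's elementary transformations globally. At each bad point $p\in Z'$ you take the subsheaf $\ker(\mathcal{E}\to\mathcal{E}_p\to Q)\subset\mathcal{E}$ on $C\times\mathbb{A}^1_K$. Langton's theorem is used only for termination over $\mathcal{O}_{\mathbb{A}^1_K,p}$.

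The essential input is the same in both proofs: Langton's theorem at the local rings of the points of $Z'$. Your version makes explicit several things the paper leaves inside the phrase ``universally closed''.
\begin{enumerate}
\item Since $\widetilde{\mathcal{E}}\subset\mathcal{E}$ with cokernel supported on $\bigcup_j C\times\{p_j\}$, agreement with $\mathcal{E}$ off the bad fibres is tautological. You need no spreading out from $\Spec \mathcal{O}_{\mathbb{A}^1_K,p}$ to a Zariski neighbourhood and no gluing isomorphism.
\item Universal closedness of an algebraic stack by itself only guarantees extensions after a finite extension of the fraction field. What is really needed is extension without base change, which is exactly what Langton's explicit procedure, i.e.\ your construction, provides.
\item You check directly that the determinant is unchanged, using that $\mathcal{O}(C\times\{p\})$ is trivial. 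This is what the later application to $Bun^{ss}_{n,L}$ requires.
\end{enumerate}

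Your argument is also the same mechanism the paper uses in Step 2 of Lemma~\ref{lem:modif-2-family}, so it makes the two lemmas uniform. Your sketch of termination (lexicographic decrease of $(\mu_{\max},\mathrm{rank})$ plus a completion argument contradicting generic semistability) is the standard one. It works over an arbitrary DVR, which matters here because $\kappa$ need not equal $K$. The paper's proof is shorter, but only because it hides these points in the appeal to the stack.
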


\begin{proof}
    Let $U\subset \#A^1_K$ be the semistable locus of $\@E$, so that by assumption, $0\in U$ and $\@E|_{C\x U}$ is semistable family. 
    This corresponds to $h: U\to Bun^{ss}_{n, L}$. By \cite[Proposition 6]{Langton75} (see also \cite[Theorem 3.7]{ABBLT}), the moduli stack $Bun^{ss}_{n, L}$ is universally closed, hence the morphism $h$ extends to a morphism (not-necessarily unique) $\-h: \#A^1_K-\{1\}\to Bun^{ss}_{n, L}$.     The morphism    $\-h$ corresponds to semistable vector bundle $\~{\@E}$ on  $C\x (\#A^1_K-\{1\})$  such that there is an isomorphism $$\phi: \~{\@E}|_{C\x U}\simeq  \@E|_{C\x U}.$$ 
We glue $(\@E|_{C\x (U\cup \{1\})}, C\x (U\cup \{1\}))$ and $(\~{\@E}, C\x (\#A^1_K-\{1\}))$ along the intersection $$(C\x (U\cup \{1\})) \cap (C\x (\#A^1_K-\{1\}))= C\x U $$ via the isomorphism $\phi$, to get the desired vector bundle $\~{\@E}$ on  $C\x \#A^1_K$.
\end{proof}

In the next lemma, given two  families of generically semistable vector bundles parametrized by $\#A^1$ which intersect into an unstable vector bundle, we modify into a family with semistable intersection. 

\begin{lemma} \label{lem:modif-2-family}

Let $\mathcal E$ and $\mathcal F$ be two  vector bundles on $C\x \#A^1_K$  where $U\subset \#A^1_K$ is the semistable locus of $\@E$  such that 
\[
\-E=i_1^{*}(\mathcal E) = i_0^{*}(\mathcal F)=\-F.
\] 
Assume that $0\in U$ and $1\notin U$. 
Then there is a semistable bundle $\~{\@E}$ and a bundle (not-necessarily semistable) $\~{\@F}$ on $C\x \#A^1_K$ such that   

\begin{enumerate}
\item $i_1^{*}(\~{\@E}) = i_0^{*}(\~{\@F}).$ 

    \item $\~{\@E}|_{C\x U}= \@E|_{C\x U}$ 

\item $\~{\@F}|_{C\x \{1\}}= \@F|_{C\x \{1\}}$ 

\end{enumerate}
\end{lemma}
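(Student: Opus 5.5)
The plan is to apply \Cref{lem:modify-1-family} to $\@E$ to obtain the family $\~{\@E}$. Then I will transport the resulting change of the fibre at $1$ into the second family $\@F$ by the same gluing trick used in that proof.

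\textbf{Step 1 (modifying $\@E$).} Since $0\in U$, the restriction $i_0^*\@E$ is semistable. The family $\@E|_{C\x U}$ defines a map $h:U\to Bun^{ss}_{n,L}$. By universal closedness of $Bun^{ss}_{n,L}$ (\cite[Proposition 6]{Langton75}, \cite[Theorem 3.7]{ABBLT}), $h$ extends to $\-h:\#A^1_K\to Bun^{ss}_{n,L}$. The finitely many missing points of $\#A^1_K\setminus U$ can be filled in one at a time over their local rings, and the local extensions glued along $U$; this includes the point $1$. This yields a semistable bundle $\~{\@E}$ on $C\x\#A^1_K$ with an isomorphism $\~{\@E}|_{C\x U}\simeq \@E|_{C\x U}$, which gives (2). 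Note that, unlike \Cref{lem:modify-1-family}, we now also modify at $1$, so $\-E':=i_1^*\~{\@E}$ is semistable but may differ from $\-E=\-F$. By Langton's construction, $\-E'$ is obtained from $\-E$ by a finite sequence of elementary modifications. Concretely, $\~{\@E}$ and $\@E$ agree on $C\x U$ (in particular near the generic point), so over the DVR $\@O_{\#A^1,1}$ we have two lattices in the same generic bundle. Hence, after twisting by powers of the uniformiser $t-1$ (which does not change the fibre up to isomorphism), one has $\~{\@E}\subset \@E$ locally near $1$ with torsion cokernel supported on $C\x\{1\}$.

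\textbf{Step 2 (modifying $\@F$ at $0$).} I need a bundle $\~{\@F}$ on $C\x\#A^1_K$ with $i_0^*\~{\@F}\simeq \-E'$ and $\~{\@F}|_{C\x\{1\}}\simeq \@F|_{C\x\{1\}}$. Since $\-E'$ and $\-E$ are related by a chain of elementary modifications, I realise each modification in a one-parameter family. Given $\-E\to Q\to 0$ with $Q$ torsion on $C$, I can find a family $\@G$ on $C\x\#A^1_K$ with $\@G_0\simeq$ the modification and $\@G_s\simeq\-E$ for $s\neq 0$. For instance, take the kernel of $\-E\boxtimes\@O\to Q\boxtimes\@O_{\{0\}}$, or better the degeneration via the extension class scaled by $s$. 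Composing with $\@F$ by gluing along a point is awkward for a single $\#A^1$, so instead I modify $\@F$ directly. I take the elementary modification of $\@F$ along the divisor $C\x\{0\}$ with respect to the surjection $\@F|_{C\x\{0\}}=\-F=\-E\to Q$, which sets $\~{\@F}:=\ker(\@F\to i_{0*}Q)$. This agrees with $\@F$ away from $C\x\{0\}$, so (3) holds. Its fibre at $0$ sits in $0\to Q\otimes\@O(-1)\to i_0^*\~{\@F}\to \ker(\-E\to Q)\to 0$ rather than being exactly the modified bundle. Therefore I will mirror exactly the modification performed on $\@E$ at $1$. The local model of $\~{\@E}$ near $1$ is the kernel of $\@E\to$ a sheaf on $C\x\{1\}$, possibly iterated. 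I apply the identical iterated construction to $\@F$ near $0$, after reparametrising $t\mapsto 1-t$ and using $\-E=\-F$. The fibres of the two constructions are then computed by the same data, giving (1).

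\textbf{Main obstacle.} The hard point is Step 2, the matching of fibres in (1). The fibre of an elementary modification depends on first-order data of the family (the normal direction), not only on the special fibre and the quotient. So copying the modification of $\@E$ at $1$ onto $\@F$ at $0$ need not give isomorphic special fibres. To handle this, I would first replace $\@F$ near $0$ by a family that agrees to first order with the reparametrised $\@E$ near $1$. This can be done by gluing: define a bundle on $C\x\#A^1$ equal to $\@E$ reparametrised on a neighbourhood of $0$ and to $\@F$ on a neighbourhood of $1$. That requires an isomorphism on the overlap, which is exactly the nontrivial part. Failing that, I would weaken (1) to an $\#A^1$-chain (inserting an intermediate family), which suffices for the intended application to \Cref{lem:a1-chain-a1-conn}.
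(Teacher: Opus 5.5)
Your proposal has a genuine gap: it does not prove (1). Step 2 ends by asserting that the mirrored modifications have matching fibres, and your own ``Main obstacle'' paragraph correctly refutes that assertion.

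\textbf{Why the mirroring fails.} Over a DVR, the special fibre of $E^{(1)}=\ker(E\to\bar Q)$ sits in an extension $0\to\bar Q\to\overline{E^{(1)}}\to\bar B\to 0$. Its class is the image of the Kodaira--Spencer class of the family under $\mathrm{Ext}^1(\bar E,\bar E)\to\mathrm{Ext}^1(\bar B,\bar Q)$, so it depends on the family and not only on $\bar E$ and $\bar Q$. For example, if $\mathcal F$ is constant near $0$, then $\overline{F^{(1)}}\cong\bar Q\oplus\bar B$. This is never semistable, since $\mu(\bar B)>\mu(\bar E)$. On the other hand, $\overline{E^{(1)}}$ may already be semistable.

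\textbf{Relation to the paper.} This mirrored ``simultaneous elementary modification'' is exactly the paper's argument. It modifies $E$ over $\mathcal O_{\mathbb A^1,1}$ and $F$ over $\mathcal O_{\mathbb A^1,0}$ by the same quotients $\bar Q,\bar Q^{(1)},\dots$, and asserts $\overline{E^{(m)}}=\overline{F^{(m)}}$ at every stage without justification. Your objection applies to that step verbatim, so following the paper will not close your gap.

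\textbf{Your fallbacks.} Neither rescues the statement. The gluing needs an isomorphism on the overlap that does not exist in general. Weakening (1) to an $\mathbb A^1$-chain changes the lemma and loses control of the chain length used in the induction of \Cref{thm:main-text-ss}.

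\textbf{The missing idea.} Condition (3) only prescribes the fibre of $\widetilde{\mathcal F}$ over $1$, so you never need to modify $\mathcal F$. Your Step 1 is fine: extending $\mathcal E|_{C\times U}$ over every point of $\mathbb A^1_K\setminus U$ by universal closedness gives a semistable $\widetilde{\mathcal E}$ satisfying (2). It then suffices to build one family on $C\times\mathbb A^1_K$ with fibre $\bar E':=i_1^*\widetilde{\mathcal E}$ over $0$ and $F_1:=i_1^*\mathcal F$ over $1$.

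\textbf{Constructing that family.} Both $\bar E'$ and $F_1$ have rank $n$ and determinant $L$; for $F_1$ this holds because $\mathrm{Pic}(C_K\times\mathbb A^1)=\mathrm{Pic}(C_K)$ and $\det \bar F\cong L$.
\begin{enumerate}
\item Choose a line bundle $I$ of sufficiently negative degree, so that $\bar E'\otimes I^{-1}$ and $F_1\otimes I^{-1}$ are globally generated.
\item Since $K$ is infinite, a general map $I^{\oplus(n-1)}\to\bar E'$, and likewise into $F_1$, is a subbundle embedding. Its degeneracy locus has expected codimension $2$, hence is empty on a curve.
\item Both bundles are therefore extensions of $L\otimes I^{1-n}$ by $I^{\oplus(n-1)}$, with classes $\xi,\xi'\in \mathrm{Ext}^1(L\otimes I^{1-n},I^{\oplus(n-1)})$.
\item Take the extension on $C\times\mathbb A^1_K$ with class $(1-t)\xi+t\xi'\in \mathrm{Ext}^1(L\otimes I^{1-n},I^{\oplus(n-1)})\otimes_K K[t]$.
\end{enumerate}
This family has fibres $\bar E'$ and $F_1$ at $0$ and $1$ and constant determinant $L$. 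It is the required $\widetilde{\mathcal F}$, giving (1) and (3).
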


\begin{proof}
We prove this in a sequence of steps. 

\medskip
\noindent
{\bf Step 1 - Elementary modification of first family :} 
Applying ~\Cref{lem:modify-1-family} to $\@E$ we get $\~{\@E}$ such that its semistable locus is equal to $\#A^1-\{1\}$ satisfying   
\begin{enumerate}
\item $\-E=i_1^{*}(\~{\@E}) = i_0^{*}(\mathcal F)=\-F$
\item $\~{\@E}$ is semistable on  $C\x (\#A^1_K-\{1\})$ 
    \item $\~{\@E}|_{C\x U}= \@E|_{C\x U}.$ 
    
\end{enumerate}
For the rest of the proof, it is enough to replace $\@E$ by $\~{\@E}$ as above.

By our assumption $\-E$ is unstable. Let $R_1$ be the local ring of $\#A^1_K$ at 1, which will be used to modify $\@E$, and let us denote by $R_0$ as the local ring of $\#A^1_K$ at 0 for $\@F$, hence $R$ (= either $R_0$ or $R_1$) is a DVR.

Let us denote by $E=i^*\@E$ for $i: C\x \Spec R\to C\x \#A^1_K$ and $F=i^*\@F$ for $i: C\x \Spec R\to C\x \#A^1_K$. We denote by $\-E$ (and $\-F$) the restriction of $E$ (and $F$) to the closed fiber $C\x \Spec K\hookrightarrow C\x \Spec R_1$ (and $C\x \Spec K\hookrightarrow C\x \Spec R_0$ respectively.)  
Note that by our assumption $\-E=\-F.$

\medskip
\noindent
{\bf Step 2 - Simultaneous elementary modification : }\\
By assumption,  $\-E$ is not semistable. Following the notation from~\cite{Langton75},   let $\-B$ be the $\beta$-subbundle of $\-E.$ We consider the exact sequence $$0\to \-B\to \-E\to \-Q\to 0$$  
Let $E^{(1)}=\ker (E\to \-E\to \-Q)$. Let us denote by $F^{(1)}=\ker (F\to \-F=\-E\to \-Q)$. 

Note that $E^{(1)}$ (and $F^{(1)}$) is a locally free sheaf on $C_{R_1}$  (and on $C_{R_0}$ resp.) such that the  restriction of $E^{(1)}$ (and $F^{(1)}$) to the closed fiber $C\x \Spec K\hookrightarrow C\x \Spec R_1$ (and $C\x \Spec K\hookrightarrow C\x \Spec R_0$ respectively) satisfies  $\-{E^{(1)}}=\-{F^{(1)}}.$ 

Now let  $\-{B^{(1)}}$ be the $\beta$-subbundle of $\-{E^{(1)}}.$ We consider the exact sequence $$0\to \-{B^{(1)}}\to\-{E^{(1)}}\to \-{Q^{(1)}}\to 0$$  
 We define $E^{(2)}=\ker (E^{(1)}\to \-{E^{(1)}}\to \-{Q^{(1)}})$ and  $F^{(2)}=\ker (F^{(1)}\to \-{F^{(1)}}=\-{E^{(1)}}\to \-{Q^{(1)}}.)$
As before note that $E^{(2)}$ (and $F^{(2)}$) is a locally free sheaf on $C_{R_1}$  (and on $C_{R_0}$ resp.) such that the  restriction of $E^{(1)}$ (and $F^{(1)}$) to the closed fiber $C\x \Spec K\hookrightarrow C\x \Spec R_1$ (and $C\x \Spec K\hookrightarrow C\x \Spec R_0$ respectively) satisfies    $\-{E^{(2)}}=\-{F^{(2)}}.$
 
Continuing this way for each $m\geq 1$, we get a sequence $\{E^{(m)}\}$ (and $\{F^{(m)}\}$) of locally free sheaves on $C_{R_1}$  (and on $C_{R_0}$, resp.) such that the  restriction of $E^{(m)}$ (and $F^{(m)}$, resp.) to the closed fiber $C\x \Spec K\hookrightarrow C\x \Spec R_1$ (and $C\x \Spec K\hookrightarrow C\x \Spec R_0$ respectively) satisfies    $\-{E^{(m)}}=\-{F^{(m)}}.$ 

By \cite{Langton75} for $m$ large enough,  $\{E^{(m)}\}$ extends $E_{\Frac R}$ on $C\x \Spec \Frac R$ and its restriction to the closed fiber $\-{E^{(m)}} $ ( $=\-{F^{(m)}}$) is semistable.  

We fix such $m$. We can extend $E^{(m)}$ on $C\x \Spec R_1$ to $\@E_1$ on $C\x U_1$ for a Zariski open neighbourhood $U_1$ of 1 and also  $\@F_0$ on $C\x V_0$ for a Zariski open neighbourhood $V_0$ of 0 such that \begin{enumerate}
    \item $\@E_1|_{C\x \Spec R_1}= E^{(m)}|_{C\x \Spec R_1}$ 
    \item $\@E_1|_{C\x (U_1-\{1\})}\simeq  \@E|_{C\x (U_1-\{1\})}$ 
    \item $\@F_0|_{C\x \Spec R_0}= F^{(m)}|_{C\x \Spec R_0}$ 
    \item $\@F_0|_{C\x (V_0-\{0\})}\simeq  \@E|_{C\x (V_0-\{0\})}$  and
    \item $\@E_1|_{C\x \{1\}}=\@F_0|_{C\x \{0\}}.$
\end{enumerate}

\noindent
{\bf Step 3 - Construction of $\~{\@E}$ :}\\
Let us write $U_0=\#A^1_K-\{1\}$. Then $\{U_0, U_1\}$ is a Zariski open cover of $C\x \#A^1_K$ such that $U_0\cap U_1=U_1-\{1\}$. Via the isomorphism from (2), $$(\@E_1)|_{C\x (U_1\cap U_0)}\simeq (\@E|_{C\x U_0})|_{C\x (U_0\cap U_1)}$$ induced by $$\@E_1|_{C\x (U_1-\{1\})}\simeq  \@E|_{C\x (U_1-\{1\})}$$ we glue $(U_0, \@E|_{U_0})$ and $(U_1, \@E_1)$  to get a locally free sheaf $\~{\@E}$ on $C\x \#A^1_K$ with desired properties. 

\medskip 
\noindent 
{\bf Step 4 - Construction of $\~{\@F}$ :}\\ 
Similarly we construct $\~{\@F}$. Let us write $V_1=\#A^1_K-\{0\}$. Then $\{V_0, V_1\}$ is a Zariski open cover of $C\x \#A^1_K$ such that $V_0\cap V_1=V_0-\{0\}$. Via the isomorphism from (4), $$(\@F_0)|_{C\x (V_0\cap V_1)}\simeq (\@F|_{C\x V_1})|_{C\x (V_1\cap V_0)}$$ induced by $$\@F_0|_{C\x (V_0-\{0\})}\simeq  \@F|_{C\x (V_0-\{0\})}$$ we glue $(V_1, \@E|_{V_1})$ and $(V_0, \@F_0)$  to get a locally free sheaf $\~{\@F}$ on $C\x \#A^1_K$ with desired properties. 
      
\end{proof}

\subsection{Moduli stack of semistable bundles }
We note the following consequence of this observation, which gives an alternate proof of $\#A^1$-connectedness of the moduli stack of semistable bundles on a curve. 

\begin{theorem}\label{thm:main-text-ss} Let $C$ be a geometrically irreducible smooth projective curve over an infinite field $k$. Then the moduli stack 
$Bun^{ss}_{n, L}$ of rank $n$ semistable vector bundles of fixed determinant $L$ on $C$ is $\#A^1$-connected iff it admits a $k$-rational point.
    
\end{theorem}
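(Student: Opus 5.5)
The plan is to prove the two implications separately; the converse is easy and the forward direction uses the Langton-type lemmas. \emph{Only if:} if $Bun^{ss}_{n,L}$ is $\#A^1$-connected, then $\piA_0(Bun^{ss}_{n,L})(k)=*$. The canonical map $N(Bun^{ss}_{n,L})\to \piA_0(N(Bun^{ss}_{n,L}))$ is surjective on sections over Henselian local schemes; in particular, since $\Spec k$ is a Nisnevich point, $[\Spec k, \@X]_{\A^1}\to \piA_0(\@X)(k)$ is the relevant identification. So the nerve has a $0$-simplex over $\Spec k$ after fibrant replacement. I would argue via the standard fact that a space with no $k$-point has empty (hence not trivial) $\piA_0$ over $k$. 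Concretely: if $Bun^{ss}_{n,L}(k)=\emptyset$, then $Bun^{ss}_{n,L}(K)$ may still be nonempty for extensions, but $\piA_0(\@X)(k)$ is the set of $\A^1$-connected components of the stalk at $\Spec k$. That stalk is $\emptyset$ because $\A^1$-localization of a space that is empty on $\Spec k$ stays empty on $\Spec k$ (the singular construction and Nisnevich fibrant replacement do not create points over a field). Hence $\piA_0\ne *$.

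\emph{If:} suppose $E_0$ is a semistable bundle of determinant $L$ over $k$. By \Cref{lem:a1-chain-a1-conn} it suffices to show that for every finitely generated $K/k$, any two semistable $E,E'$ on $C_K$ with determinant $L$ are connected by a chain of naive $\#A^1$-homotopies inside $Bun^{ss}_{n,L}$. Base-changing $E_0$, it suffices to connect an arbitrary $E$ to $E_{0,K}$. By \cite{HY24} ($Bun_{n,L}$ is $\#A^1$-connected; indeed their proof gives naive chain connectedness over fields, which I would take from their construction using $K$ infinite) there is a chain $\@E^1,\dots,\@E^r$ of bundles on $C\x\#A^1_K$ with $i_0^*\@E^1=E$, $i_1^*\@E^j=i_0^*\@E^{j+1}$, $i_1^*\@E^r=E_{0,K}$, all with determinant $L$ (after twisting by pullbacks of line bundles on $\#A^1$, which are trivial). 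The intermediate fibers may be unstable.

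Now induct along the chain, using the two lemmas. Given $\@E^1$ with $0$ in its semistable locus $U$: if $1\in U$, first apply \Cref{lem:modify-1-family}-style extension from $U$ to all of $\#A^1_K$ (universal closedness over the DVRs at each missing point, gluing as in that proof) to make $\@E^1$ fully semistable. If $1\notin U$, apply \Cref{lem:modif-2-family} to the pair $(\@E^1,\@E^2)$, getting $\~{\@E}^1$ semistable everywhere, with the same $0$-fiber $E$, and $\~{\@E}^2$ with $i_0^*\~{\@E}^2=i_1^*\~{\@E}^1$ semistable and the same $1$-fiber. Then continue with $(\~{\@E}^2,\@E^3)$; at the last step, $1$-fiber $E_{0,K}$ is semistable, so \Cref{lem:modify-1-family}-type extension finishes. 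I must also check that the determinant stays $L$: elementary modifications change the determinant by $\@O(-\mathrm{rk}\,\-Q\cdot[C\x\{1\}])$ twisted data, so I would correct by tensoring with a line bundle pulled back from the base or note the fixed-determinant stack's universal closedness (semistability with fixed determinant modulo twisting by $\Pic(\#A^1)=0$ and degree normalization).

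The main obstacle is this determinant bookkeeping and, more seriously, the ``only if'' direction's rigorous justification that $\piA_0(k)$ is empty without a $k$-point (I'd cite Morel's result that $\piA_0(\@X)(F)$ is a quotient of $\@X(F)$ for fields, via \cite[Lemma 6.1.3]{Mor05}). For \emph{if}, the key subtlety is ensuring that after Step~1 of \Cref{lem:modif-2-family} the new family is still glued compatibly with the next one; the lemma's conclusion (3) guarantees this.
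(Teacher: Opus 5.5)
Your proposal is correct and is essentially the paper's own proof. The ``only if'' direction is the surjectivity of sections onto $\piA_0$ at the Henselian point $\Spec K$, and the ``if'' direction takes the naive chain from \cite{HY24} and inducts along it using \Cref{lem:modify-1-family} and \Cref{lem:modif-2-family}; your direct Langton extension in the case $1\in U$ and your determinant bookkeeping are small clean-ups of the written argument.

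One caution, which applies equally to the paper: Step 2 of the proof of \Cref{lem:modif-2-family} asserts $\-{E^{(1)}}=\-{F^{(1)}}$, but these are extensions of $\-B$ by $\-Q$ whose classes depend on the first-order deformations of $\-E$ in the two families, so they can differ. For example, take $\-E=M^{-1}\oplus M$ with $\deg M=1$, let $\mathcal E$ be the family of extensions of $M$ by $M^{-1}$ of class $(1-t)e$ with $0\neq e\in H^1(C,M^{-2})$, and let $\mathcal F$ be constant; then $\-{E^{(1)}}$ is the nonsplit, semistable extension while $\-{F^{(1)}}=M^{-1}\oplus M$. It is therefore safer to avoid unstable junctions altogether. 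Write $E$ and $E_{0,K}$ as two points of one extension space $\mathrm{Ext}^1\bigl(L((n-1)D),\mathcal O(-D)^{\oplus(n-1)}\bigr)$, using $n-1$ general sections of $E(D)$ and $E_{0,K}(D)$ for an effective $D$ of large degree (this is where $K$ infinite is used), and apply your Langton-extension step to the line joining them.
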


\begin{proof}
 Assume $Bun^{ss}_{n, L}$ is $\#{A}^1$-connected. Then by \cite[Remark 2.5, page 106]{MV99}, the moduli stack $Bun^{ss}_{n, L}$  admits a $K$-rational point for any finitely generated field extension $K$ over $k$ (for this we note that for such $K$, $\Spec K$ is a Henselian local scheme).

    For the converse, we note that as $Bun^{ss}_{n, L}(k) \neq \emptyset$, then $Bun^{ss}_{n, L}(K) \neq \emptyset$ for any finitely generated field extension $K$ over $k$. 
    By~\Cref{lem:a1-chain-a1-conn}, it is enough show the following: let $K$ be a finitely generated field extension of $k.$ 
Let $E_0$ and $E_1$ be two semistable bundles on $C\x \#A^1_K$. Then there exists a chain of semistable bundles parametrized by $\mathbb A^1_K$ connecting $E_0$ and $E_1$. 

As shown in~\cite{HY24}, $E_0$ and $E_1$ are connected by a chain of bundles, say $\@E_1, \dots, \@E_r$  parametrized by $\#A^1_K$. We show that there exists a chain of semistable bundles parametrized by $\mathbb A^1_K$ connecting $E_0$ and $E_1$. We show this by induction on $r,$ the length of the $\#A^1$-chain. 

    Assume $r=1$. Let $U\subset \#A^1_K$ be the semistable locus of $\@E$ so that $0, 1\in U$ and $\@E|_{C\x U}$ is semistable family. 
    This corresponds to $h: U\to Bun^{ss}_{n, L}$. By \cite[Proposition 6]{Langton75} (see also \cite[Theorem 3.7]{ABBLT}), the moduli stack $Bun^{ss}_{n, L}$ is universally closed, hence the morphism $h$ extends to a morphism (not-necessarily unique) $\-h: \#A^1_K\to Bun^{ss}_{n, L}$.     The morphism    $\-h$ corresponds to semistable vector bundle $\~{\@E}$ on  $C\x \#A^1_K$  such that $$\~{\@E}|_{C\x U}= \@E|_{C\x U}$$ and 
    $$\~{\@E}|_{C\x 0}= E_0 \ \ \text{and} \ \ \~{\@E}|_{C\x 1}= E_1.$$

    Assume $r>1$ and by the induction hypothesis, any chain of length $r-1$ connecting any two semistable bundles on $C\x \Spec K$ can be joined by an $\#A^1$-chain of semistable bundles parametrized by $\#A^1_K$.
        
        Consider the first two intersecting families $\@E_1$ and $\@E_2.$  There are two possibilities: they intersect in a semistable bundle or an unstable bundle. In the case that these intersect in a semistable bundle, then applying~\Cref{lem:modify-1-family} to the first chain and applying the induction hypothesis to the remaining chain proves the claim.

        In the case these intersect in an unstable bundle, applying~\Cref{lem:modif-2-family} to $\@E_1$ and $\@E_2$, we get semistable bundle $\~{\@E_1}$ and not-necessarily semi-stable bundle $\~{\@E_2}$ which intersect in a semi-stable bundle say $F$. Then $F$ is connected to $E_2$ by a chain of bundles of length $r-1$, hence by induction $F$ is connected to $E_2$ by a chain of semistable bundles. Hence we are done in this case as well.   
\end{proof}
In the case the genus $g\geq 2$, we can make the following uniform assertion.
\begin{corollary}\label{cor:genusatleast2}
     Let 
   Let $C$ be a geometrically irreducible smooth projective curve of genus $\geq 2$ over an infinite field $k$ with $C(k)\neq \emptyset$. Let $L\in \Pic(C)$ and $n\geq 1$. Then 
    the moduli stack $Bun^{ss}_{n, L}$ of rank $n$ semistable vector bundles of fixed determinant $L$ on $C$ admits a $k$-rational point and hence is $\#A^1$-connected. 
\end{corollary}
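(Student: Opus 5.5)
By \Cref{thm:main-text-ss}, it is enough to produce a semistable vector bundle of rank $n$ and determinant $L$ on $C$ defined over $k$ itself, i.e.\ a $k$-point of $Bun^{ss}_{n, L}$. The plan is to build it as a direct sum of line bundles, or more generally of stable bundles, all of the same slope. The hypothesis $C(k)\neq\emptyset$ is used so that for every integer $m$ there is a line bundle of degree $m$ defined over $k$, namely $\@O_C(mp)$ for a chosen $p\in C(k)$. Write $d=\deg L$ and $d=nq+s$ with $0\le s<n$.

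\textbf{Case $s=0$.} Take $E=\@O_C(qp)^{\oplus (n-1)}\oplus L\otimes\@O_C(-(n-1)qp)$. This is a sum of $n$ line bundles, each of degree $q$, and $\det E\simeq L$. A direct sum of line bundles of equal slope is semistable, so $E$ is the required $k$-point.

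\textbf{Case $s\neq 0$.} Here no sum of line bundles works, because all summands would need slope $d/n\notin\mathbb{Z}$. Set $h=\gcd(n,d)$, $n'=n/h$, $d'=d/h$. It suffices to find a stable bundle $G$ over $k$ with $(\rank, \deg)=(n',d')$. Then $E=G^{\oplus(h-1)}\oplus G'$ has determinant $L$, where $G'=G\otimes M$ and $M$ is the degree-$0$ line bundle $L\otimes(\det G)^{-h}$ if $n'=1$. If $n'>1$, one instead takes $G'$ to be a stable bundle of the same type with suitable determinant $L\otimes(\det G)^{-(h-1)}$, which is again produced by the construction below.

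To construct stable bundles of coprime type $(n',d')$ with prescribed determinant over $k$, I would use the coarse moduli space $M^{s}_{n',\Lambda}$, which is nonempty (using $g\ge 2$) and geometrically irreducible. It is smooth and unirational, since it is dominated by an open subset of a space of extensions or of an affine space of Hecke-modification data at the rational point $p$. Because $k$ is infinite, it has a $k$-point. One still has to descend this to an actual bundle, and in the coprime case the Brauer obstruction vanishes since a universal family exists. Alternatively, one can build the bundle directly by iterated elementary modifications at $p$: start from $\@O_C(ap)^{\oplus n'}$ and perform Hecke modifications along general $k$-rational flags in the fibre at $p$, which exist because $k$ is infinite. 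Stability is then an open condition on the affine parameter space of such modifications, and it is nonempty over $\bar k$ by the genus $\ge 2$ assumption. A nonempty open subset of an affine space over an infinite field has a $k$-point.

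I expect the main obstacle to be this last step: showing that the stable locus in the parameter space of modifications is nonempty, so that there is a rational choice of data giving a stable bundle. The rest of the argument is formal.
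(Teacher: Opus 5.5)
Your case $s=0$ is fine, and your overall plan (exhibit a $k$-point of $Bun^{ss}_{n,L}$, then apply \Cref{thm:main-text-ss}) is the paper's. The paper takes the existence of that $k$-point from Hoffmann's lemma [Hof07, Lemma 1.5]. So the only non-formal content is exactly the step you leave open, and as written it is not established. There are two concrete problems.
(i) Every Hecke modification at $p$ of $\mathcal{O}_C(ap)^{\oplus n'}$ has determinant $\mathcal{O}_C(jp)$ for some $j$. Your family therefore never contains the required $G'$ with $\det G'\cong L\otimes(\det G)^{-(h-1)}$ unless $L\cong\mathcal{O}_C(dp)$; you would have to start from something like $\mathcal{O}_C^{\oplus(n'-1)}\oplus\Lambda(ap)$.
(ii) ``Nonempty over $\bar k$ by the genus $\geq 2$ assumption'' is not an argument. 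The hypothesis $g\geq 2$ gives you a stable bundle $F$ over $\bar k$ with the right rank and determinant, but you still need $F$ to occur as a fibre of your $k$-defined family. That reachability statement is the missing idea.
Route (a) hides the same point in the word ``unirational''. Moreover, once you have a $k$-rational parameter space carrying a family, its $k$-points already are bundles over $k$, so the coarse moduli space and the Brauer/Poincar\'e discussion add nothing.

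Here is one way to close the gap, which also makes the reduction to coprime type unnecessary: work directly with rank $n$ and determinant $L$.
Take a stable $F$ over $\bar k$ with $\det F\cong L_{\bar k}$, and choose $m\gg 0$ such that $F(mp)$ is globally generated. Since $\dim C=1$, a general $(n-1)$-dimensional space of sections spans a trivial subbundle of rank $n-1$, giving $0\to\mathcal{O}_C^{\oplus(n-1)}\to F(mp)\to L(nmp)\to 0$.
Hence $F(mp)$ is a $\bar k$-point of the affine space $W=\operatorname{Ext}^1_C(L(nmp),\mathcal{O}_C^{\oplus(n-1)})$. This space is defined over $k$ and carries a universal extension $\mathcal{E}$ with $\det\mathcal{E}\cong L(nmp)$.
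The semistable locus is open in $W$ and contains the point of $F$, so it is nonempty. Since $k$ is infinite it has a $k$-point $w$, and $\mathcal{E}_w(-mp)$ is a $k$-point of $Bun^{ss}_{n,L}$.
In your Hecke language, the same reachability comes from the following fact. Over $\bar k$, rank-$n$ bundles whose determinants differ by a multiple of $p$ are isomorphic on the affine curve $C\setminus p$. So $F$ embeds in a twist $E_0(jp)$ of your corrected starting bundle with cokernel supported at $p$. The tower of iterated elementary modifications is an iterated projective bundle, hence $k$-rational.
With this filled in, your argument becomes a self-contained substitute for the citation the paper uses.
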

\begin{proof} Since $g\ge 2$ and $k$ is infinite, by \cite[Lemma 1.5]{Hof07}, the open substack $Bun^{ss}_{n, L}$ admits a $k$-rational point. The corollary follows from~\Cref{thm:main-text-ss}.
\end{proof}
\begin{remark}
    In the case the genus is 0, the moduli stack $Bun^{ss}_{n, L}$ may or may not admit a $k$-rational point, for example $Bun^{ss}_{n, L, \#P^1}(k)$ is non-empty iff $n\mid \deg L$. Also by~\cite{Tu93}, in case genus of $C$ is 1, and $k=\-k$, $Bun^{ss}_{n, L, C}(k)\neq \emptyset.$ 
\end{remark}
\section{\texorpdfstring{$\#A^1$-connectedness of moduli stack of symplectic bundles}{}}\label{sec:symplectic}

In this section, we will prove the $\#A^1$-connectedness of the moduli stack of symplectic bundles. 
For the rest of the paper, we will follow the following assumptions, unless mentioned otherwise. 

Let $C$ be a geometrically irreducible smooth projective curve over a field $k$ of genus $g\geq 2$ and $C(k)\neq \emptyset.$ Let $L$ and $I$ be line bundles on $C$ and $n\geq 1$ be an integer.

We consider the moduli stack $Sympl_{2n, L}$ of rank $2n$ symplectic vector bundles  on $C$ given by groupoids  $Sympl_{2n, L}(S)$ on each $k$-scheme $S$ as follows: 
\begin{itemize}
    \item Each object consists of a tuple $$(\@E, b: \@E\otimes \@E\to \@L)$$ where $\@E$ is a vector bundle of rank $2n$, $\@L$ a line bundle on $C\x_k S$, Zariski locally on $S$ isomorphic to $\pi^*L$ where $\pi:C\x_k S\to C$ is the projection and a non-degenerate symplectic bilinear form $b: \@E\otimes \@E\to \@L.$
    \item A morphism $(\@E, b: \@E\otimes \@E\to \@L)\to (\@E', b': \@E'\otimes \@E'\to \@L')$ consists of isomorphism of vector bundles $\@E\to \@E'$ and $\@L\to \@L'$ which are compatible with $b$ and $b'.$
\end{itemize} 
This stack is algebraic (in the sense of Artin) and locally of finite type over $k$. By standard
deformation theory, it is smooth of dimension $n(2n + 1)(g-1)-1$ over $k$. Moreover it is shown by Biswas-Hoffman (~\cite[Lemma 5.2]{BH08} that the moduli stack $Sympl_{2n, L}$ is birational to $B\#G_m \times \#A^s$
 for some $s$, if $n$ and the degree of $L$ are both odd. 
 
  We recall from \cite[Definition 3.2]{BH08} that $$\Phi_I: \@Sub_{1, I, 2n}(\@E^{univ}) \to Sympl_{2n, L}$$ is the moduli stack of rank $2n$ symplectic vector bundles together with a sub-bundle of $E$ isomorphic to $I.$ More precisely, consider the moduli stack  $\@Sub_{1, I, 2n}(\@E^{univ}) $  given by groupoid on each $k$-scheme $S$ by 

\begin{itemize}
    \item Each object consists of a tuple $$(\@E, b: \@E\otimes \@E\to \@L, \@I\subset \@E)$$ in $Sympl_{2n, L}(S) $ and a subbundle $\@I\subset \@E$ such that $\@I$ is (Zariski) locally in $S$ isomorphic to the pullback $\pi^*I$ where $\pi: C\x_k S\to C$ is the projection morphism. 
    
    \item A morphism $$(\@E, b: \@E\otimes \@E\to \@L, \@I\subset \@E)\to (\@E', b': \@E'\otimes \@E'\to \@L', \@I'\subset \@E')$$ consists of isomorphism of vector bundles $$\@E\to \@E', \@L\to \@L' \ \text{and} \ \@I\to \@I'$$ which are compatible with $b$, $b'$ and the inclusions $\@I\subset \@E$ and $\@I'\subset \@E'$.
\end{itemize} 
We also recall from \cite[Definition 3.4]{BH08} that there is a morphism of moduli stacks $$ \Phi_I': {\bf \@{E}xt}(\@F^{univ}, I, 2n-2) \to \@Sympl_{2n-2, L}$$ 
sending $$(\@F, \ c: \@F\otimes \@F\to \@L, \ 0\to \@I\to \@H\to \@F\to 0)\mapsto (\@F, \ c: \@F\otimes \@F\to \@L)$$ with the morphisms described above. 

There is also a morphism $\Psi_I$ connecting the above two morphisms \cite[Definition 3.6]{BH08} defined by sending
$$
(\@E, b: \@E\otimes \@E\to \@L, \@I\subset \@E) \mapsto (\@F:= \@I^\perp / \@I, \ c: \@F\otimes \@F\to \@L, \ 0\to \@I\to \@H := \@I^\perp \to \@F \to 0)
$$
where $c$ is the symplectic form on $\@F$ induced by $b$ on $\@E$.

To summarize, we have a diagram of moduli stacks
\begin{equation}\label{eq:aux-stacks}
    \xymatrix{
\@Sub_{1, I, 2n}(\@E^{univ}) \ar[r]^-{\Psi_I} \ar[d]_{\Phi_I} & \@Ext(\@F^{univ}, I, 2n-2) \ar[d]^{\Phi'_I} \\
Sympl_{2n, L}  & Sympl_{2n-2, L}
}
\end{equation}
We exploit this diagram to reduce the $\#A^1$-connectedness of $Sympl_{2n, L}$ to the $\#A^1$-connectedness of the other auxiliary moduli stacks appearing in the diagram.

We first show the following proposition, which will be used in the proof of $\#A^1$-connectedness of $Sympl_{2n, L}$ 
\begin{proposition}\label{prop:ext}
Let $C$ be a geometrically irreducible smooth projective curve over a field $k$ of genus $g\geq 2$ and let $K$ be a finitely generated field extension of $k$. Let $\@Q, \@K, \~{\@K}$ be vector bundles on $C\x_k \#A^1_K$. Let $f: \~{\@K}\to \@K$ be a surjective morphism of vector bundles. Assume the diagram  $$\xymatrix{
   \@M \ar[r]^-{\phi} \ar[d]^{\gamma} & \@Ext(\@Q, \~{\@K})  \ar[d]^{f_*} \\ 
   \#A^1_K\ar[r] &  \@Ext(\@Q, \@K) 
   }$$  is a cartesian diagram of stacks over the field $k$. Then the following holds:
   \noindent\begin{enumerate}
       \item $\@Ext(\@Q, \~{\@K})$ and $\@Ext(\@Q, \@K)$ are representable by vector bundles over $\#A^1_K.$ 
       \item The morphism $f_*$ is representable by surjective morphism of vector bundles over $\#A^1_K.$
   \item   Any two $K$-points of $\@M$ are naively $\#A^1$-chain connected. 
   \end{enumerate}
\end{proposition}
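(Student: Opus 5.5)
Here $\@Ext(\@Q,\@K)$ denotes the stack over $\#A^1_K$ whose $T$-points (for $T\to \#A^1_K$) are extension classes in $\mathrm{Ext}^1_{C\x T}(\@Q_T,\@K_T)$, i.e.\ classes of extensions $0\to \@K_T\to \@H\to \@Q_T\to 0$. I will identify this with the functor $T\mapsto H^1(C\x T, \@Q_T^\vee\otimes \@K_T)$, as in \cite{BH08}. This requires a base-change statement for $R^1p_*$ along the projection $p: C\x \#A^1_K\to \#A^1_K$. The bottom arrow $\#A^1_K\to \@Ext(\@Q,\@K)$ is a fixed section $\xi$, and $\@M$ is the fibre of $f_*$ over it: the space of lifts of the extension class $\xi$ to $\~{\@K}$.

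\textbf{Step (1).} Put $\@V=\@Q^\vee\otimes\~{\@K}$ and $\@W=\@Q^\vee\otimes \@K$, both vector bundles on $C\x \#A^1_K$. The plan is to show that $R^1p_*\@V$ and $R^1p_*\@W$ are locally free and commute with arbitrary base change. Since $C$ is a curve, $R^2p_*=0$, so $R^1p_*$ always commutes with base change; it is locally free exactly when $h^1$ of the fibres is constant, equivalently when $h^0$ is constant.

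\textbf{Where the hypothesis enters.} I expect this to be the main obstacle. Constancy of $h^0$ is not automatic, so I would use the setting in which the proposition is applied. There $\@Q$ is the symplectic quotient family and $\@K$ is built from the line bundle $I$, which is chosen of sufficiently negative degree relative to $\@Q$; this uses $g\ge 2$ and the boundedness available over the affine line. With that choice, $H^0(C_t,\@Q_t^\vee\otimes \@K_t)=0$ for every $t\in\#A^1_K$, because $\mathrm{Hom}(\@Q_t,\@K_t)=0$ by degree. Then $p_*\@W=0$, and cohomology and base change gives that $R^1p_*\@W$ is locally free of rank $-\chi$, which is constant by Riemann--Roch. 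The same argument applies to $\@V$. If the statement is meant in full generality, I would instead stratify, or pass to the affine vector-bundle stack attached to the two-term complex $Rp_*$. On $\#A^1_K$ itself I would first try the degree-vanishing route. The total spaces $\mathbb{V}(R^1p_*\@V)$ and $\mathbb{V}(R^1p_*\@W)$ over $\#A^1_K$ then represent the two $\@Ext$ stacks.

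\textbf{Step (2).} The surjection $\~{\@K}\to \@K$ has locally free kernel $\@N$, giving $0\to \@Q^\vee\otimes\@N\to \@V\to\@W\to 0$. Applying $p_*$ yields the exact sequence
\[ R^1p_*\@V\to R^1p_*\@W\to R^2p_*(\@Q^\vee\otimes\@N)=0. \]
So $f_*$, which is the map induced by pushforward of extensions, is a surjective morphism of vector bundles over $\#A^1_K$. Its kernel $\@G$ is locally free, since it is the kernel of a surjection of locally free sheaves.

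\textbf{Step (3).} $\@M$ is the preimage of the section $\xi$ under this surjection, so it is a torsor under $\mathbb{V}(\@G)$ over $\#A^1_K$. Because $\#A^1_K$ is affine, the torsor is trivial: $\xi$ lifts globally since $H^1(\#A^1_K,\@G)=0$. Hence $\@M\cong \mathbb{V}(\@G)\cong \#A^{N}\x\#A^1_K$, using that vector bundles on $\#A^1_K$ are trivial. Two $K$-points $m_0,m_1$ of $\@M$ are then joined by the straight line $t\mapsto (1-t)m_0+tm_1$, which is a morphism $\#A^1_K\to\@M$. If the two points lie over different points of the base $\#A^1_K$, the line also moves in the base coordinate, which is allowed. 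This shows $m_0$ and $m_1$ are naively $\#A^1$-homotopic, and in particular naively $\#A^1$-chain connected.
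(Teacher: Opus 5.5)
\textbf{The gap is in Step (1).} Your outline is the paper's route: both $\mathcal{E}xt$ stacks are vector bundles over $\mathbb{A}^1_K$, $f_*$ is a surjection of bundles, $\mathcal{M}$ is an affine space over $K$, and a line joins any two $K$-points. Your Steps (2) and (3) are tighter than the paper's. You get surjectivity directly from $R^2p_*(\mathcal{Q}^\vee\otimes\ker f)=0$ instead of quoting [BH08, Lemma~3.8]. You also treat $\mathcal{M}$ correctly as a torsor under the kernel bundle, trivial because the base is affine, whereas the paper just calls $\gamma$ a vector bundle.

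Step (1), however, is not derived from the hypotheses of the proposition. You add the fibrewise vanishing $H^0(C_t,\mathcal{Q}_t^\vee\otimes\mathcal{K}_t)=0=H^0(C_t,\mathcal{Q}_t^\vee\otimes\widetilde{\mathcal{K}}_t)$ for all $t$, and both (1) and (2) rest on it. This cannot be repaired in the stated generality, because (1) is false there:
\begin{itemize}
\item If $0\neq u\in\mathrm{Hom}(\mathcal{Q}_t,\widetilde{\mathcal{K}}_t)$, every extension $0\to\widetilde{\mathcal{K}}_t\xrightarrow{\iota}E\xrightarrow{\pi}\mathcal{Q}_t\to 0$ has the automorphisms $\mathrm{id}+\iota u\pi$. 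So $\mathcal{E}xt(\mathcal{Q},\widetilde{\mathcal{K}})$ is not even an algebraic space.
\item If $h^0$ jumps, even the functor of isomorphism classes $T\mapsto H^0(T,(R^1p_*(\mathcal{Q}^\vee\otimes\widetilde{\mathcal{K}}))_T)$ is not a vector bundle, exactly as you observed.
\end{itemize}
The paper's proof hides the same issue in its one-line appeal to [Lan83, Prop.~3.1]. So you have proved the proposition with your vanishing hypothesis added, which is the form in which (1)--(2) are true. Neither of your fallbacks (stratifying, or the vector-bundle stack) can give (1) without it, since a quotient with nontrivial stabilisers is not a vector bundle.

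\textbf{The hypothesis is not supplied by the application.} There (Step~1 of the proof of Theorem~\ref{thm:main-text}) one has $\mathcal{Q}=\mathcal{I}^{\vee}\otimes\mathcal{L}$, $\mathcal{K}=\mathcal{F}^{\vee}\otimes\mathcal{L}$ and $\widetilde{\mathcal{K}}=\mathcal{H}$. These roles are the opposite of what you describe. The conditions needed are $\mathrm{Hom}(\mathcal{F}_t,\mathcal{I}_t)=0$ and $H^0(C_t,\mathcal{H}_t\otimes\mathcal{I}_t\otimes\mathcal{L}_t^{-1})=0$ for all $t$. Moreover $I$ is fixed in Step~2 before the family $h$ over $\mathbb{A}^1_K$ is produced, so it cannot be chosen negative relative to that family.

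\textbf{Part (3), which is all that is used later, holds unconditionally.} This comes from the vector-bundle-stack route you mention.
\begin{itemize}
\item Twisting by a relatively ample divisor gives compatible two-term complexes $V^0\to V^1$ and $W^0\to W^1$ of bundles on $\mathbb{A}^1_K$. They compute $Rp_*$ of $\mathcal{Q}^\vee\otimes\widetilde{\mathcal{K}}$ and $\mathcal{Q}^\vee\otimes\mathcal{K}$, and the $\mathcal{E}xt$ stacks are $[V^1/V^0]$ and $[W^1/W^0]$.
\item The map $V^1\oplus W^0\to W^1$, $(v,w)\mapsto f(v)-dw$, is a surjection of bundles because $H^2=0$ on a curve.
\item Since the base is affine, the bottom arrow lifts to a section $\xi$ of $W^1$. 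This exhibits $\mathcal{M}$ as $[Z'/V^0]$, where $Z'$ is the preimage of $\xi$, an affine space over $K$.
\item Torsors under vector groups over a field are trivial, so every $K$-point of $\mathcal{M}$ lifts to $Z'$. Lines in $Z'$ then connect any two of them.
\end{itemize}
This version of your Step (3) needs no vanishing hypothesis.
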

\begin{proof}
    \noindent\begin{enumerate}
        \item $\@Ext(\@Q, \~{\@K})$ is given by groupoid for each $\#A^1_K$-scheme $S$,  
        \begin{itemize}
            \item the objects are exact sequences $0\to q_S^*(\~{\@K}) \to \~{\@E}\to q_S^*(\@Q) \to 0$ of vector bundles on $C\x_k S$ where $q_S: C\x_k S\to C\x \#A^1_K.$ 
            \item the morphisms are morphisms of exact sequences. 
        \end{itemize}
       Let $p: C\x \#A^1_K\to \#A^1_K$ be the projection morphism and $p_S: C\x_k S\to C$ the base change and $q_S: C\x S\to C\x\#A^1_K$ induced morphism. The moduli stack $\@Ext(\@Q, \~{\@K})$ is given by $$S\mapsto  H^0(S, {\bf Ext^1}_{p_S}(q_S^*\@Q,q_S^*\~{\@K}))$$  By \cite[Proposition 3.1]{Lan83}, this functor is representable by a vector bundle over $\#A^1_K.$ This vector bundle is trivial by Quillen-Suslin theorem (\cite{Qui76}, \cite{Sus76}), hence represented by an affine space. 

       Similarly, the moduli stack $\@Ext(\@Q, \@K)$ is representable by a vector bundle over $\#A^1_K.$

  \item By Yoneda's lemma, the morphism $f_*: \@Ext(\@Q, \~{\@K})\to \@Ext(\@Q, \@K)$ is induced by a morphism of vector bundles over $\#A^1_K.$ Since the morphism of stacks is surjective by~\cite[Proof of Lemma 3.8]{BH08}, the map of vector bundles is surjective. 

  \item By (2), since the diagram is cartesian commutative, the morphism $\gamma$ is representable by the structure morphism of a vector bundle over  $\#A^1_K$, which is trivial hence isomorphic to an affine space. Let $e_1, e_2$ be two $K$-points of $\@M$. The points correspond to $K$-points of an affine space over $K,$ hence there is a naive $\#A^1$-homotopy from   $e_1$ to $ e_2$. 
  \end{enumerate}\end{proof}
We are now all set to prove the main result of the section. Recall the auxilliary stacks from the diagram~\eqref{eq:aux-stacks}.

\begin{theorem}\label{thm:main-text}
   Let $C$ be a geometrically irreducible smooth projective curve over a field $k$ of genus $g\geq 2$ and $C(k)\neq \emptyset$. Let $L$ and $I$ be line bundles on $C$ and $n\geq 1$. Then 
   \begin{enumerate}
       \item The moduli stack $\@Ext(\@F^{univ}, I, 2n-2)$ is naively $\#A^1$-chain connected, hence $\#A^1$-connected. 
       \item The stack $\@Sub_{1, I, 2n}(\@E^{univ})$ is naively $\#A^1$-chain connected, hence $\#A^1$-connected. 
       \item The moduli stack $Sympl_{2n, L}$ of rank $2n$ symplectic vector bundles  on $C$ is naively $\#A^1$-chain connected for each $n\geq 1$, hence $\#A^1$-connected.

   \end{enumerate}
\end{theorem}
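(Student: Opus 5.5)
Throughout, $K$ is a finitely generated field extension of $k$. The plan is an induction on $n$ that runs all three statements together, with the inductive hypothesis being (3) for $Sympl_{2n-2,L}$. By \Cref{lem:a1-chain-a1-conn}, it suffices in each case to show that any two $K$-points of the stack are joined by a chain of naive $\#A^1$-homotopies. For the base case $n=1$ I would use that a rank $2$ symplectic bundle with form valued in $\@L$ is just a rank $2$ bundle $\@E$ with an isomorphism $\det\@E\simeq\@L$. Then $Sympl_{2,L}$ is essentially $Bun_{2,L}$ up to the automorphism data of $\@L$, and the latter is $\#A^1$-connected by \cite{HY24}. If $k$ is finite I would instead run the argument below with $Sympl_{0,L}$, interpreted as a point or $B\#G_m$, as the base case.

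\textbf{Step (1).} Take two $K$-points $(F_i,c_i,0\to I\to H_i\to F_i\to 0)$, $i=0,1$, of $\@Ext(\@F^{univ},I,2n-2)$. By the induction hypothesis (3) for $Sympl_{2n-2,L}$, the pairs $(F_0,c_0)$ and $(F_1,c_1)$ are joined by a chain of symplectic families $\@F_j$ over $\#A^1_K$. Over each family $\@F_j$, the fibre of $\Phi'_I$ is the stack of extensions $\@Ext(\@F_j,\pi^*I)$. By \Cref{prop:ext}(1) applied with $\@Q=\@F_j$ and $\@K=\pi^*I$, this fibre is a trivial vector bundle over $\#A^1_K$, i.e. an affine space. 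Any $K$-point of it, for instance the split extension or any chosen lift, can be linked within an affine space to the given extension classes at the endpoints. Gluing these pieces gives a naive chain in $\@Ext(\@F^{univ},I,2n-2)$.

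\textbf{Step (2).} The map $\Psi_I$ should be handled via the description in \cite[Lemma 3.8]{BH08}. A point of $\@Sub_{1,I,2n}$ lying over an extension $0\to I\to H\to F\to0$ amounts to an extension $0\to H\to E\to I^{\vee}\otimes L\to 0$ whose image under pushout along $H\to F$ is determined by $H$ via the symplectic duality. The compatibility is then a cartesian square exactly of the shape in \Cref{prop:ext}, with $\@Q=I^\vee\otimes L$, $\~{\@K}=\@H$, $\@K=\@F$ and $f:\@H\to\@F$ the surjection; the isotropy condition on $I$ should be automatic. Hence the fibres of $\Psi_I$ over any $\#A^1_K$-family of extensions are affine spaces by \Cref{prop:ext}(3). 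Lifting the chain from Step (1) piece by piece, and connecting consecutive lifts inside the affine-space fibres over the junction points, gives chain connectedness of $\@Sub_{1,I,2n}(\@E^{univ})$.

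\textbf{Step (3).} This step has two parts. First I would find a single line bundle $I$ such that every $K$-point $(E,b)$ of $Sympl_{2n,L}$ admits a line subbundle isomorphic to $I_K$. Any line subbundle is automatically isotropic because $b$ is alternating. To get this I would take $\deg I\ll 0$, so that $H^1(E\otimes I^{-1})=0$ and $E\otimes I^{-1}$ is globally generated, and then a general section gives a subbundle since $\rank E\ge2>\dim C$. Here $C(k)\neq\emptyset$ lets me write $I=\@O(-mp)$ with $p$ rational, and $k$ (or $K$) infinite guarantees a rational general section. The catch is that the bound $m$ depends on $E$. I would fix it by taking $I$ to depend on the two given points: choose $m$ large enough for both $E_0$ and $E_1$. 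Both points then lift to $\@Sub_{1,I,2n}$, which is chain connected by (2), and pushing the chain forward along $\Phi_I$ connects them.

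The main obstacle is Step (2): one has to identify precisely the fibre of $\Psi_I$ with the cartesian square of \Cref{prop:ext}, including the surjectivity needed there, which is taken from \cite{BH08}. For finite $K$, producing rational sections in Step (3) would need extra care.
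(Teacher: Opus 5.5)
Your proposal is correct and follows essentially the paper's argument. The paper likewise inducts on $n$ from the trivial case $Sympl_{0,L}$, so your detour through \cite{HY24} for $n=1$ is unnecessary. It proves (1) by joining each extension to the split one inside the vector space $\mathcal{E}xt(F_i,I_i)$ and lifting the chain in $Sympl_{2n-2,L}$ via the split section $s_I$; it proves (2) via the cartesian square from \cite{BH08} and Proposition~\ref{prop:ext}; and it proves (3) by choosing $I$ of very negative degree that embeds in both bundles. Your handling of chain junctions, your use of $C(k)\neq\emptyset$ to take $I=\mathcal{O}(-mp)$, and your caveat that rational sections over finite $K$ need extra care (which the paper passes over silently) make explicit what the paper leaves implicit.
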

\begin{proof}
In view of~\Cref{lem:a1-chain-a1-conn}, it is enough to show that the corresponding stacks are $\#A^1$-chain connected. 

    We show for each $n\geq 1,$ (1) implies (2) and (2) implies (3). Then we show (1) for each $n$ by induction on $n.$ 
\medskip\\
\noindent\_{\bf Step 1}: We show (1) implies (2). 
Let $K$ be a finitely generated field extension of $k$ and let  $[E_1], [E_2]$ be $K$-points of the moduli stack $\@Sub_{1, I, 2n}(\@E^{univ})$. Consider the morphism $$\Psi_I: \@Sub_{1, I, 2n}(\@E^{univ})\to \@Ext(\@F^{univ}, I, 2n-2)$$ and 
 for $i=1, 2$ the images $\Psi_I([E_i])$ of the corresponding $K$-valued points of $\@Ext(\@F^{univ}, I, 2n-2)$. 

Assume $\@Ext(\@F^{univ}, I, 2n-2)$ is naively $\#A^1$-chain connected, hence we can assume WLOG that there is $$h: \#A^1_K\to \@Ext(\@F^{univ}, I, 2n-2)$$ such that $h(i): \Spec K\to   \@Ext(\@F^{univ}, I, 2n-2)$ corresponds to $\Psi_I([E_i])$ for $i=1, 2.$ 
    Let $h$ correspond the classifying morphism $c_{\@H}:  \#A^1_K \to \@Ext(\@F^{univ}, I, 2n-2)$ corresponding to an object $$(\@F, \ c: \@F\otimes \@F\to \@L, \ 0\to \@I\to \@H\to \@F\to 0)$$ where $\@F$ is a symplectic bundle and also $\@L$, $\@I$ are line bundles on $C\x_k \#A^1_K$ and $$0\to \@I\to \@H\to \@F\to 0$$ is an extension of vector bundles on $C\x_k \#A^1_K$.  
    Consider the corresponding extension $$0\to \@F^{dual}\otimes \@L \to \@H^{dual}\otimes\@L\to \@I^{dual}\otimes\@L \to 0.$$ 
    Consider $\@M:=\@Ext(\@I^{dual}\otimes \@L, \@H\xrightarrow{c^{\sharp}\circ p} \@F^{dual}\otimes \@L)/\@H^{dual}\otimes\@L$, the moduli stack over $\#A^1_K$ where for any $\#A^1_K$-scheme $S$ the corresponding groupoid $\@M(S)$ consists of objects         $$ \xymatrix{
    0 \ar[r] & \pi^*(\@H) \ar[d]^{\pi^*(c^{\sharp}\circ p)}\ar[r] \ar[d] & \~{\@E} \ar[r] \ar[d] & \pi^*(\@I^{dual}\otimes\@L) \ar[r] \ar@{=}[d] & 0 \\
    0\ar[r] & \pi^*(\@F^{dual}\otimes \@L) \ar[r] & \pi^*(\@H^{dual}\otimes\@L) \ar[r] & \pi^*(\@I^{dual}\otimes\@L) \ar[r] & 0 
    }$$ where  $\pi: C\x_k S\to C\x_k \#A^1_K$ and, the top row is an exact sequence of vector bundles  over $C\x_k S$. 
    
    By~\cite[Proof of Corollary 3.9]{BH08}, there is a cartesian square $$\xymatrix{
    \@M \ar[r] \ar[d] &  \@Sub_{1, I, 2n}(\@E^{univ}) \ar[d]^{\Psi_I} \\
    \#A^1_K \ar[r]^-{c_{\@H}}  & \@Ext(\@F^{univ}, I, 2n-2) 
    }$$ of moduli stacks. 
   Lifting the points $[E_i]$ of the moduli stack $\@Sub_{1, I, 2n}(\@E^{univ})$ to $K$-points $e_i$ of the moduli stack $\@M$ respectively, where $e_1$ lies over $0: \Spec K\to \#A^1_K$ and $1: \Spec K\to \#A^1_K$. Now it is enough to show that the $K$-points $e_1, e_2$ of $\@M$ are naively $\#A^1$-chain connected.  

   By \cite[Definition 3.6, (6)]{BH08}, there is a cartesian square of canonical 1-morphisms $$\xymatrix{
   \@M \ar[r]^-{\phi} \ar[d] & \@Ext(\@I^{dual}\otimes\@L, \@H)  \ar[d]^{(c^{\sharp}\circ p)_*} \\ 
   \#A^1_K\ar[r] &  \@Ext(\@I^{dual}\otimes\@L,\@F^{dual}\otimes \@L)
   }$$ 
   where the right vertical morphism is a morphism of moduli stacks over $\#A^1_K.$ 
Now we are done by taking $\~{\@K}=\@H, \@K=\@F^{dual}\otimes \@L,$ and $ \@Q=\@I^{dual}\otimes\@L$ in~\Cref{prop:ext}.   
\\
\medskip
\noindent   \_{\bf Step 2}: We show (2) implies (3).

 Let $K$ be a finitely generated field extension over $k$  and $[E_1], [E_2]$ be two $K$-valued points of the moduli stack $Sympl_{2n, L}$ corresponding to symplectic bundles $E_1$ and $E_2$, respectively, on $C_K=C\x_k\Spec K$. By choosing a line bundle $I$ of deg $\ll0$, we can assume $I$ is a sub-bundle of both $E_1$ and $E_2.$ Hence $[E_1]$ and $[E_2]$ are in the image of the morphism of moduli stacks $$\@Sub_{1, I, 2n}(\@E^{univ})\to Sympl_{2n, L}$$ again denoted by $[E_i]=(E_i, b_i: E_i\otimes E_i\to L, I\subset E_i)$ for $i=1, 2$.   Now by (2) we are done.
\medskip\\
 \noindent    \_{\bf Step 3}: We show (1) by induction on $n\geq 1$. 

 For $n=1$ we have the isomorphisms, $Sympl_{0, L}\simeq \Spec k$ and $\@Ext(\@F^{univ}, I, 0)\simeq B\#G_m$ which is naively $\#A^1$-chain connected. 
 
 Assume $n>1$ and by induction hypothesis for $n-1$, $\@Ext(\@F^{univ}, I, 2n-4)$ is naively $\#A^1$-chain connected.  
  By the induction hypothesis, and by (2) implies (3), we have that $Sympl_{2n-2, L}$ is naively $\#A^1$-chain connected. Consider the morphism $$\Phi'_I: \@Ext(\@F^{univ}, I, 2n-2)\to Sympl_{2n-2, L}.$$ 
Let $[\@H_i]=(\@F_i, c_i: \@F_i\otimes \@F_i\to \@L, 0\to\@I_i \to \@H_i \to \@F_i\to 0)$ for $i=1, 2$ be two objects of the groupoid $\@Ext(\@F^{univ}, I, 2n-2)(\Spec K)$. Let $$[\@I_i\oplus \@F_i]:=(\@F_i, c_i: \@F_i\otimes \@F_i\to \@L, 0\to\@I_i \to \@I_i\oplus \@F_i\to \@F_i\to 0)$$ for each $i=1, 2$. 
Then for each fixed $i$, there is an inclusion of moduli functors $$\@Ext(\@F_i, \@I_i) \to \@Ext(\@F^{univ}, I, 2n-2)$$ so that the image contains the $K$-points $[\@H_i]$ and $[\@I_i\oplus \@F_i]$. 
By \cite[Proposition 3.1]{Lan83}, the moduli stack $\@Ext(\@F_i, \@I_i)$ is represented by a vector bundle, hence naively $\#A^1$-chain connected and in particular, $[\@H_i]$ and $[\@I_i\oplus \@F_i] $ are naively $\#A^1$-chain connected in $\@Ext(\@F^{univ}, I, 2n-2).$

Consider the morphism of moduli stacks $$s_I:  Sympl_{2n-2, L}\to \@Ext(\@F^{univ}, I, 2n-2)$$ which is defined on groupoids for each $k$-scheme $S$ as follows $$(\@F, c:\@F\otimes \@F\to \@L)\mapsto (\@F, c:\@F\otimes \@F\to \@L, 0\to \pi^*I\to \pi^*I\oplus\@F\to\@F \to 0)$$  where $\pi: C\x S\to C$ is the projection morphism. The morphism $s_I$ is a section of $\Phi'_I.$ 

Clearly the $K$-valued points $[\@I_i\oplus \@F_i]$ are in the image of the morphism $s_I$ for $i=1, 2$ and since as noted before the moduli stack $Sympl_{2n-2, L}$ is naively $\#A^1$-chain connected, the points $[\@I_1\oplus \@F_1]$ and $[\@I_2\oplus \@F_2]$ in the groupoid $\@Ext(\@F^{univ}, I, 2n-2)(\Spec K)$ are naively $\#A^1$-chain homotopic. Hence we are done. 
\end{proof}

    \section{Applications}\label{sec:appln}
    
\subsection{\texorpdfstring{$\#A^1$-h-cobordance}{}} In this subsection, we record a simple observation.  
\begin{definition}(\cite[Definition 3.1.1]{AM11})
    We say two smooth proper $k$-schemes $X_0, X_1$ are directly $\#A^1$-h-cobordant if there is a smooth $k$-scheme with a morphism $f: X\to \#A^1$ such that 
    \begin{enumerate}
        \item $f^{-1}(0)$ is isomorphic to $X_0$ and $f^{-1}(1)$ is isomorphic to $X_1$ and
        \item the natural inclusions $X_i\to X$ are $\#A^1$-equivalences for $i=0,1$.
    \end{enumerate}

    We say $X_0, X_1$ are $\#A^1$-h-cobordant if they are equivalent by the relation generated by the relation of direct $\#A^1$-h-cobordance.
\end{definition}

As an easy consequence of \cite[Theorem 1.2]{HY24} we record the following proposition.
\begin{proposition}
   Let $C$ be a smooth projective curve over an infinite field $k$. Let $(\@E, b: \@E\otimes \@E\to \@L_1)$ and $(\@F, c: \@F\otimes \@F\to \@L_2)$ be symplectic vector bundles of rank $2n$ on $C$. Let $X=\#P(\@E)$ and $Y=\#P(\@F)$ be the associated projective bundles. Consider the following statements.
\begin{enumerate}
    \item $X$ and $Y$ are $\#A^1$-equivalent.
        \item $X$ and Y are $\#A^1$-h-cobordant. 
        \item  $\det \@E \otimes (\det\@F)^{-1}=\@L^{\otimes 2n}$ for some $\@L\in \Pic(C).$
\item[(3')] $\@L_1 \otimes \@L_2^{-1} = \@N^{\otimes 2}$ for some $\@N \in \Pic(C)$.
     \end{enumerate}

     Then \begin{enumerate}[label=(\alph*)]
     \item $(1)\iff (2) \iff $(3). 
            \item $(3')\Rightarrow (1)$
    \item  if $n\geq 1$ is odd, then $(3) \iff (3')$. 
     
     \end{enumerate}
\end{proposition}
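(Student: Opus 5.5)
The plan is to reduce everything to \cite[Theorem 1.2]{HY24}. For projective bundles $\#P(\@E)$ and $\#P(\@F)$ of rank $m$ vector bundles over a curve $C$ over an infinite field, that theorem gives the equivalence of three conditions: $\#A^1$-equivalence, $\#A^1$-h-cobordance, and $\det\@E\otimes(\det\@F)^{-1}$ being an $m$-th power in $\Pic(C)$. Here $m=2n$, so part (a) is exactly that theorem. Nothing symplectic is needed; I will only check that the hypotheses (smooth projective curve, infinite field) match.

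For (b) and (c), I first compute determinants from the symplectic structure. A non-degenerate form $b:\@E\otimes\@E\to\@L_1$ gives an isomorphism $\@E\simeq \@E^{\vee}\otimes\@L_1$. Taking determinants yields $\det\@E\simeq(\det\@E)^{-1}\otimes\@L_1^{\otimes 2n}$, that is, $(\det\@E)^{2}\simeq \@L_1^{\otimes 2n}$. Better, the Pfaffian gives directly $\det\@E\simeq\@L_1^{\otimes n}$: the form induces $\wedge^2\@E\to\@L_1$, hence $\wedge^{2n}\@E\to\@L_1^{\otimes n}$, and non-degeneracy makes this an isomorphism, which can be checked locally on a symplectic basis. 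Similarly $\det\@F\simeq\@L_2^{\otimes n}$, so
\[
\det\@E\otimes(\det\@F)^{-1}\simeq(\@L_1\otimes\@L_2^{-1})^{\otimes n}.
\]
For (b): if $\@L_1\otimes\@L_2^{-1}=\@N^{\otimes2}$, then this quotient equals $\@N^{\otimes 2n}$. So (3) holds with $\@L=\@N$, and (a) gives (1).

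For (c), (3')$\Rightarrow$(3) is the computation just made. The converse is the main obstacle. Put $M=\@L_1\otimes\@L_2^{-1}$ and suppose $M^{\otimes n}=\@L^{\otimes 2n}$. Then $(M\otimes\@L^{-2})^{\otimes n}$ is trivial, so $M\otimes\@L^{-2}$ is an $n$-torsion class $T$. Since $n$ is odd, $2$ is invertible modulo $n$: choose $a$ with $2a\equiv1 \pmod n$. Then $T=T^{2a}=(T^{a})^{2}$, so $M=(\@L\otimes T^{a})^{\otimes2}$ is a square, which is (3'). The point to watch is that this argument only uses torsion in the group $\Pic(C)$, so no divisibility beyond the torsion subgroup is needed. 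One should still confirm that the Pfaffian identification $\det\@E\simeq\@L_1^{\otimes n}$ holds as an honest isomorphism of line bundles, not merely up to squares. This is the step I would verify most carefully, through the local model of the form.
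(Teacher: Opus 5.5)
Your proposal is correct and follows the paper's proof. Part (a) is \cite[Theorem 1.2]{HY24} in rank $2n$; the identities $\det\mathcal{E}\simeq\mathcal{L}_1^{\otimes n}$ and $\det\mathcal{F}\simeq\mathcal{L}_2^{\otimes n}$ rewrite (3) as $(\mathcal{L}_1\otimes\mathcal{L}_2^{-1})^{\otimes n}\simeq\mathcal{L}^{\otimes 2n}$; and for odd $n$ the $n$-torsion class $\mathcal{L}_1\otimes\mathcal{L}_2^{-1}\otimes\mathcal{L}^{-2}$ is a square (your $T^{a}$ is the paper's $\mathcal{L}'$), with (3')$\Rightarrow$(3)$\Rightarrow$(1) giving (b). The identification you flag does hold as a genuine isomorphism: define $\wedge^{2n}\mathcal{E}\to\mathcal{L}_1^{\otimes n}$ locally by the Pfaffian of the Gram matrix. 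This glues because $\mathrm{Pf}(A^{T}MA)=\det(A)\,\mathrm{Pf}(M)$, and it works in every characteristic, whereas the naive $\omega^{\wedge n}$ equals $n!\,\mathrm{Pf}$ and can vanish in small characteristic.
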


\begin{proof} The proof follows from \cite[Theorem 1.2]{HY24} by simply observing that the condition (3) in \cite[Theorem 1.2]{HY24} translates into $\@L_1^{\otimes n}\otimes  \@L_2^{\otimes -n}=\@L^{\otimes 2n}$ in $\Pic (C)$ for some $\@L\in \Pic(C)$. In other words,  $\@L_1\otimes  \@L_2^{\otimes -1}\otimes \@L^{\otimes -n}$ is of order $n$ in $\Pic(C).$ But $n$ is odd, hence $\@L_1\otimes  \@L_2^{\otimes -1}\otimes \@L^{\otimes -2}\simeq \@L'^{\otimes 2}$ for some $\@L'\in\Pic (C).$ Hence $\@L_1\otimes  \@L_2^{\otimes -1}\simeq \@N^{\otimes 2}$ for $\@N=\@L\otimes \@L'.$ 
 \end{proof}

\subsection{\texorpdfstring{$\#A^1$-connectedness of Moduli stack of quasi-parabolic symplectic bundles}{}}\label{sec:quasi-pb-symplectic}
Fix a geometrically irreducible smooth projective curve $C$ of genus $\geq 2$ over a field $k$ and a finite subset $D \subset C$ of distinct $k$-rational points. With extra care, one may be able to generalize this to the case where $D$ consists of closed points rather than $k$-rational points. But for simplicity, we only consider the above setup.    

We define the quasi-parabolic bundle which is a reformulation of principal bundle with a flagging of certain type as defined in~\cite[Section 3.2]{HS10}.    

\begin{definition}\label{def:quasi-para-data} 
    A quasi-parabolic data $(e, \_m)$ on $C$ of rank $2n$ along $D$ is defined as follows: 
\begin{itemize}
    \item For each point $p\in D$, $e(p)\in \#N$ and a tuple of postive integers $\_m(p)=(m_{p, 1}, \dots, m_{p, e(p)})$ such that $m_{p, 1}+ \dots+ m_{p, e(p)}=2n.$
\item The set $e=\{e(p) : p\in D\}$ is the system of lengths. 
\item The set $\_m=\{\_m(p) : p\in D\}$ is the system of multiplicities.
\end{itemize} 
\end{definition}

\begin{definition}[quasi-parabolic bundles]\label{def:q-parabolic}

Fix a line bundle $L$ on $C$ and a quasi-parabolic data $(e, \_m)$ on $C$ of rank $2n$ along $D$. 

For any $k$-scheme $S$ and the projection $\pi: C\x_k S\to C$, a quasi-parabolic bundle of rank $2n$ parametrized by $S$ of quasi-parabolic data $(e, \_m)$ denoted by $\@E_*$  given by
\begin{itemize}
    \item a vector bundle $\@E$ on $C\x S$
   \item for each $p\in D$, there is a flag of vector bundles of the pull back vector bundle $\@E_p=\@E|_{\{p\}\x S}$ on $\{p\}\x S$ 
$$\@E_p= \@E_{p, 1}\supsetneq \cdots \supsetneq \@E_{p, e(p)}\supsetneq  \@E_{p, e(p)+1}=\{0\}$$
such that 
\begin{itemize}
     
 \item $e(p)$ is the length of the flag and 
    \item  $m_{p,i}=\rank~\@E_{p,i}-\rank~\@E_{p,i+1}$ are called multiplicities at p.
\end{itemize}
\end{itemize}

     We recall the following definition. We note that usually the following definition is considered when the base field is complex numbers. 
\end{definition}
\begin{definition}[quasi-parabolic symplectic bundles]\label{def:q-parabolic-sympl} Fix a line bundle $L$ on $C$ and a quasi-parabolic data $(e, \_m)$ on $C$ of rank $2n$ along $D$. 

For any $k$-scheme $S$ and the projection $\pi: C\x_k S\to C$, a quasi-Parabolic symplectic bundle of rank $2n$ parametrized by $S$ of quasi-parabolic data $(e, \_m)$ is given by tuple $(\@E_*, \phi_*, \@L(D))$ where 
\begin{itemize}
    \item $\@E_*$ is a quasi-parabolic bundle parametrized by $S$ in the sense of \Cref{def:q-parabolic}.
    \item $\@L$ a line bundle on $C\x_k S$, Zariski locally on $S$ isomorphic to $\pi^*L$ where $\pi:C\x_k S\to C$ is the projection.
    
    $\@L(D)$ denotes the quasi-parabolic bundle parametrized by $S$, 
    given by the line bundle $\@L$ with the special quasi-parabolic structure, \ie $e(p)=1$ for all $p\in D.$ 
    \item an anti-symmetric morphism of parabolic bundles $$\phi_*: \@E_*\otimes \@E_*\to \@L(D)$$
    such that the induced morphism 
    $$\~{\phi}: \@E_*= \@E_* \otimes \@O_{C\x S} \to  \@E_*\otimes (\@E_* \otimes \@E^{\vee}_*) = (\@E_* \otimes \@E_*) \otimes \@E^{\vee}_*\xrightarrow{\phi\otimes {\rm Id}}  \@L(D) \otimes \@E^{\vee}_*$$
    is an isomorphism.
\end{itemize}
\end{definition}
\begin{definition}\label{def:moduli-q-para-sympl}
    Fix a line bundle $L$ on $C$ and a quasi-parabolic data $(e, \_m)$ on $C$ of rank $2n$ along $D$. One endows $L(D)$ with a parabolic bundle structure where for each point $p\in D,$ with a filtration $\@L_p\supset 0$ as in \cite[Remark 2.2]{BCD26}. 
    
One defines the Moduli stack $qPSympl^{e, \_m}_{2n, L(D)}$ of quasi-Parabolic symplectic bundles of rank $2n$ on $C$ as follows. 
More precisely, on each $k$-scheme $S$, the groupoid is given by 
\begin{itemize}
    \item the objects are $(\@E_*, \phi_*, \@L(D))$ where $$\phi_*: \@E_*\otimes \@E_*\to \@L(D)$$ where $\@E_*$ is a symplectic quasi-parabolic bundle on $C\x_k S$ in the sense of \Cref{def:q-parabolic-sympl}, \cite[Definition 2.4]{BCD26} and $\@L$ a line bundle on $C\x_k S$, Zariski locally on $S$ isomorphic to $\pi^*L$ where $\pi:C\x_k S\to C$ is the projection. 
    \item the morphism  $(\@E_*, \phi_*, \@L(D))\to (\@E'_*, \phi'_*, \@L'(D))$ is an isomorphism of underlying parabolic bundles and isomorphism of parabolic line bundles $\@L(D) \to \@L'(D)$ compatible with the anti-symmetric bilinear forms $\phi, \phi'.$ 
\end{itemize}

\end{definition}
\begin{theorem}\label{thm:quasi-parabolic-sympl}
        Let $C$ be a geometrically irreducible smooth projective curve of genus $g\geq 2$ over a field $k$ and a finite subset $D \subset C$ of distinct $k$-rational points. Fix a quasi-parabolic data $(e, \_m)$ on $C$ of rank $2n$ along $D$. 
    
    Then $qPSympl^{e, \_m}_{2n, L}$ is $\#A^1$-connected. 
\end{theorem}
\begin{proof}
     We note the following fiber product diagram of stacks $$\xymatrix{
    qPSympl^{e, \_m}_{2n, L}\ar[d] \ar[r]& qPSympl^{e, \_m}_{2n} \ar[d] \\
    Sympl_{2n, L} \ar[r] & Sympl_{2n}}$$
where the vertical maps are forgetful morphisms (forgetting the quasi-parabolic structure). To see that the right vertical map is well defined, the proof in~\cite[Lemma 3.1]{BCD26} works in our setting. Recall that the right vertical map is given by $$(\@E_*, \phi_*: \@E_*\otimes \@E_*\to \@L(D))\mapsto  (\@E, \phi:\@E\otimes \@E\to \@L).$$ It follows that the above diagram is cartesian. 

    By~\cite[Lemma 3.2.2(ii)]{HS10} the right vertical map is Zariski locally trivial with fiber a product of finite flag varieties, hence the same holds for the left vertical map.  Now each Flag variety is $\#A^1$-chain connected and by~\Cref{thm:main}(3), $Sympl_{2n, L}$ is $\#A^1$-connected, hence the moduli stack $qPSympl^{e, \_m}_{2n, L}$ is $\#A^1$-connected. 
\end{proof}

\end{document}